\newcommand{\IR}{\mathbb R}
\newcommand{\IN}{\mathbb N}
\newcommand{\pr}{\mathrm{pr}}
\newcommand{\upa}{{\uparrow}}
\newcommand{\II}{\mathbb I}
\newcommand{\Int}{\operatorname{Int}}
\newcommand{\Ra}{\Rightarrow}
\newcommand{\Hom}{\mathrm{Hom}}
\newcommand{\w}{\omega}
\newcommand{\Aff}{\mathrm{Aff}}
\newcommand{\U}{\mathcal U}
\newtheorem{theorem}{Theorem}[section]
\newtheorem{proposition}[theorem]{Proposition}
\newtheorem{corollary}[theorem]{Corollary}
\newtheorem{problem}[theorem]{Problem}
\theoremstyle{definition}
\newtheorem{example}[theorem]{Example}
\title[Semigroups embeddable into products of cones over topological groups]{On topological Clifford  semigroups embeddable into products of cones over topological groups}
\author{Taras Banakh and Iryna Pastukhova}
\address{T.~Banakh: Ivan Franko University of Lviv (Ukraine) and Jan Kochanowski University in Kielce (Poland)}
\email{t.o.banakh@gmail.com}
\address{I.~Pastukhova: Ivan Franko University of Lviv (Ukraine)}
\email{irynkapastukhova@gmail.com}
\keywords{Topological Clifford semigroup; ditopological Clifford $U$-semigroup, $U$-semilattice, precompact semigroup, subinvariant metric}
\subjclass{22A15; 22A26}
\thanks{The first author has been partially financed by NCN grant
DEC-2012/07/D/ST1/02087.}
\begin{document}

\begin{abstract} In this paper we detect topological Clifford semigroups which are embeddable into
Tychonoff products of topological semilattices and cones over topological groups.
Also we detect topological Clifford semigroups which embed into compact topological Clifford semigroups.
\end{abstract}
\maketitle

\section{Introduction}
This paper was motivated by the problem of recognizing topological Clifford semigroups which can be embedded into compact topological Clifford semigroups. We shall resolve this problem for topological Clifford $U$-semigroups (i.e., topological Clifford semigroups whose idempotent band $E$ is a $U$-semilattice). Topological $U$-semilattices and their relation to other classes of topological semilattices will be discussed in Section~\ref{topsem}. In Theorem~\ref{precomp} we shall prove that a topological Clifford $U$-semigroup $S$ embeds into a compact topological Clifford semigroup if and only if $S$ is ditopological, the maximal semilattice  $E$ of $S$ embeds into a compact topological semilattice and each maximal subgroup $H_e$, $e\in E$, of $S$ embeds into a compact topological group. Ditopological Clifford semigroups were introduced and studied in \cite{BP}. The class of such semigroups contains all compact topological Clifford  semigroups, all topological semilattices, all topological groups and is closed under many operations over topological Clifford semigroups. We shall discuss ditopological Clifford semigroups in Subsection~\ref{ditop}. In Section~\ref{2emb} we shall prove that each ditopological Clifford $U$-semigroup $S$ embeds into a Tychonoff product of topological semilattices and cones over maximal subgroups $H_e$, $e\in E$, of $S$.

\section{Preliminaries} In this section we recall some definitions and prove some auxiliary results.

\subsection{Topological spaces} All topological spaces considered in this paper are {\em Hausdorff}.
A topological space $X$ is called
\begin{itemize}
\item {\em zero-dimensional} if closed-and-open subsets form a base of the topology of $X$;
\item {\em punctiform} if each non-empty compact connected subset of $X$ is a singleton.
\end{itemize}
It is clear that each zero-dimensional topological space is punctiform. By
\cite[1.4.5]{Eng}, a locally compact space is zero-dimensional if
and only if it is punctiform.

The {\em weight} $w(X)$ of a topological space $X$ is the smallest cardinality $|\mathcal B|$ of a base $\mathcal B$ of the topology of $X$.

\subsection{Semigroups} A {\em semigroup} is a set $S$ endowed with an associative binary operation $*:S\times S\to S$. In the sequel we shall often omit the symbol $*$ of the operation and write $xy$ instead of $x*y$.

A semigroup $S$ is called
\begin{itemize}
\item {\em a semilattice} if $xy=yx$ and $xx=x$ for all $x,y\in S$;
\item {\em regular} if for each $x\in X$ there is $y\in X$ such that $xyx=x$ and $yxy=y$;
\item {\em an inverse semigroup} if for each $x\in X$ there exists a unique element $x^{-1}\in S$ such that $xx^{-1}x=x$ and $x^{-1}xx^{-1}=x^{-1}$;
\item {\em a Clifford semigroup} if $S$ is inverse and $xx^{-1}=x^{-1}x$ for all $x\in X$.
\end{itemize}
It is well-known \cite[5.1.1]{Howie} that a semigroup $S$ is inverse if and only if $S$ is regular and the set $E=\{e\in S:ee=e\}$
is a commutative subsemigroup of $S$. In this case $E$ will be called the {\em maximal semilattice} of $S$. If $S$ is a Clifford semigroup, then the map
$$\pi:S\to E,\;\;\pi:x\mapsto xx^{-1}=x^{-1}x,$$is a semigroup homomorphism, and for every idempotent $e\in E$ the
preimage $\pi^{-1}(e)$ coincides with the maximal subgroup $H_e=\{x\in S:xx^{-1}=e=x^{-1}x\}$ of $S$ containing the idempotent $e$.

A subset $I$ of a semigroup $S$ is called {\em an ideal} if $SIS\subset I$.

\subsection{Topological semigroups} By a {\em topological} ({\em inverse}) {\em semigroup} we understand an (inverse) semigroup $S$ endowed with a topology such that the semigroup operation $*:S\times S\to S$ is continuous (and the inversion $(\cdot)^{-1}:S\to S$ is continuous). If the inverse semigroup $S$ is Clifford, then $S$ will be called a {\em topological Clifford semigroup}.

\begin{proposition}\label{p2.1} A compact topological semigroup $S$ is Clifford if and only if $S$ contains a dense Clifford subsemigroup.
\end{proposition}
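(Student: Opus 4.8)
The ``only if'' implication is trivial, since a Clifford semigroup $S$ is its own dense Clifford subsemigroup. For the ``if'' implication, let $D$ be a dense Clifford subsemigroup of a compact topological semigroup $S$. First I would observe that $\overline D$ is a subsemigroup of $S$ (by joint continuity of the multiplication and the Hausdorff property), so $\overline D=S$. The plan is to use compactness to ``complete'' the Clifford structure of $D$ to all of $S$: convergent subnets of the inverses $d^{-1}$ and of the idempotents $dd^{-1}$, $d\in D$, will supply the data making $S$ Clifford.

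The key step: given $a\in S$, pick a net $(d_i)$ in $D$ with $d_i\to a$ and, by compactness, pass to a subnet along which $d_i^{-1}\to\bar a$ and $e_i:=d_id_i^{-1}=d_i^{-1}d_i\to\bar e$ for some $\bar a,\bar e\in S$. Passing the identities $d_id_i^{-1}d_i=d_i$, $\ d_i^{-1}d_id_i^{-1}=d_i^{-1}$, $\ d_id_i^{-1}=d_i^{-1}d_i=e_i=e_i^2$, $\ e_id_i=d_ie_i=d_i$ to the limit (here joint continuity of the multiplication is essential) yields
$$a\bar aa=a,\qquad \bar aa\bar a=\bar a,\qquad a\bar a=\bar aa=\bar e=\bar e^{\,2},\qquad \bar ea=a\bar e=a.$$
In particular $\bar a$ is an inverse of $a$, so $S$ is a regular semigroup; moreover $a$ and $\bar a$ both lie in the local monoid $\bar eS\bar e$ (whose identity is $\bar e$) and are mutually inverse there with product $\bar e$, so $a$ lies in the group of units of $\bar eS\bar e$.

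Next I would pin down the idempotents $E(S):=\{x\in S:x^2=x\}$. Applying the key step to an idempotent $g\in E(S)$, the element $g$ becomes an idempotent lying in the group of units of $\bar eS\bar e$; but an idempotent unit of a monoid equals the identity of that monoid, hence $g=\bar e=\lim_i d_id_i^{-1}\in\overline{E(D)}$. Since $E(S)$ is closed and contains $E(D)$, this gives $E(S)=\overline{E(D)}$. As $D$ is inverse (being Clifford), $E(D)$ is a commutative subsemigroup, and since each centralizer $\{x\in S:xw=wx\}$ is closed in $S$, a routine two-step closure argument shows that $E(S)=\overline{E(D)}$ is a commutative subsemigroup of $S$ consisting of idempotents. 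By \cite[5.1.1]{Howie}, $S$ is an inverse semigroup. Finally, for $a\in S$ the element $\bar a$ from the key step is an inverse of $a$, so by uniqueness of inverses $\bar a=a^{-1}$, and then $aa^{-1}=a\bar a=\bar e=\bar aa=a^{-1}a$; thus $S$ is Clifford.

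The main obstacle I anticipate is the third step — ruling out ``spurious'' idempotents of $S$ not inherited from $D$ — because it is the commutativity of $E(S)$ (equivalently, the inverse property of $S$), rather than regularity, that genuinely uses the Clifford hypothesis on $D$, and it needs the identification $E(S)=\overline{E(D)}$. The point that makes this go through is the elementary fact that an idempotent which is a unit in a monoid must be its identity. The only other care needed is the systematic use of compactness to extract the convergent subnets of $(d_i^{-1})$ and $(d_id_i^{-1})$, together with the joint continuity of the multiplication when passing the defining identities of $D$ to the limit.
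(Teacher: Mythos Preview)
Your proposal is correct and follows essentially the same strategy as the paper: use compactness to produce, for each $a\in S$, an element $\bar a$ satisfying the inverse and Clifford relations (the paper phrases this via the closure of $\{(x,x^{-1}):x\in D\}$ in $S\times S$ rather than nets), show $E(S)=\overline{E(D)}$ and hence that idempotents commute, deduce that $S$ is inverse, and then conclude Clifford. The only notable variation is the final step: the paper instead shows that $E(S)$ lies in the center of $S$ (via the same two-step closure trick, using that idempotents are central in the Clifford semigroup $D$) and then cites \cite[II.2.6]{Pet}, whereas your direct verification $aa^{-1}=a\bar a=\bar e=\bar a a=a^{-1}a$ is slightly more self-contained.
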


\begin{proof} Let $X\subset S$ be a dense Clifford subsemigroup of $S$. Let $E=\{e\in S:ee=e\}$ be the set of idempotents in $S$ and $E_X=X\cap E$. By the commutativity of the semigroup $E_X$ and the continuity of the semigroup operation $*:S\times S\to S$, the closure $\bar E_X$ of $E_X$ in $S$ is a semilattice.

Since the semigroup $X$ is Clifford, for every $x\in X$ there is a unique element $x^{-1}\in X$ such that $xx^{-1}x=x$, $x^{-1}xx^{-1}=x^{-1}$ and $xx^{-1}=x^{-1}x$. Consider the set $D_X=\{(x,x^{-1}):x\in X\}$ and its closure $\bar D_X$ in $S\times S$. It follows that $\bar D_X\subset\{(x,y)\in S\times S:xyx=x,\;\;yxy=y,\;\;xy=yx\}$. The continuity of the semigroup operation $*:S\times S\to S$ and the inclusion $*(D_X)\subset E_X$ imply $*(\bar D_X)\subset \bar E_X$.

Let $\pr_1:S\times S\to S$, $\pr_1:(x,y)\mapsto x$, be  the coordinate projection. The projection $\pr_1(\bar D_X)\supset X$, being a dense closed subset of $S$, coincides with $S$. Consequently, for each point $x\in S$ there is a point $x^\star\in S$ such that $(x,x^\star)\in\bar D_X$ and hence $xx^\star x=x$, $x^\star xx^\star=x^\star$ and $xx^\star=x^\star x$. This means that the semigroup $S$ is regular.

 We claim that $E=\bar E_X$.
Given any idempotent $e\in E$, find an element $e^\star\in S$ such that $(e,e^\star)\in \bar D_X$.
Taking into account that $ee^\star e=e$, $e^\star ee^\star=e^\star$ and $ee^\star=e^\star e$, we conclude that $e=ee=(ee^\star e)(ee^\star e)=(eee^\star)(eee^\star)=(ee^\star)(ee^\star)=ee^\star\in *(\bar D_X)\subset \bar E_X$.
Therefore, $E=\bar E_X$ is a commutative semigroup. Being a regular semigroup with commuting idempotents, the semigroup $S$ is inverse.
To show that $S$ is Clifford, it suffices to show that the set $E=\bar E_X$ is contained in the center $Z(S)=\{z\in S:\forall x\in S\;\;xz=zx\}$ of $S$.
Observe that for every $x\in X$ the set $Z_x=\{z\in S:zx=xz\}$ is closed in $S$ and contains the set $E_X$. Then $E=\bar E_X\subset Z_x$ for all $x\in X$.
 Now observe that for every $e\in E$ the set $Z_e=\{x\in X:xe=ex\}$ is closed and contains the dense subset $X$ of $S$. Consequently,
 $Z_e=S$ for all $e\in E$ and hence $E$ is contained in the center of $S$. By \cite[II.2.6]{Pet}, the inverse semigroup $S$ is Clifford.
\end{proof}

\begin{problem} Assume that a compact topological semigroup $S$ contains a dense inverse subsemigroup. Is $S$ inverse?
\end{problem}

We shall say that a topological semigroup $X$ {\em embeds} into a topological semigroup $Y$ if there exists a semigroup
homomorphism $h:X\to Y$, which is a topological embedding. In this case we shall write $X\hookrightarrow Y$.

A topological semigroup $S$ is defined to be {\em precompact} if $S$ embeds into a compact topological semigroup.

\begin{proposition}\label{compC} A topological Clifford semigroup $S$ is precompact if and only if $S$
embeds into a compact topological Clifford semigroup $K$ having
weight $w(K)=w(S)$.
\end{proposition}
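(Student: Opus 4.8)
The ``if'' part is immediate, since a compact topological Clifford semigroup is in particular a compact topological semigroup. For the ``only if'' part, fix an embedding of $S$ into a compact topological semigroup and let $C$ be the closure of the image of $S$ there. Then $C$ is a compact topological semigroup containing a dense subsemigroup isomorphic, as a topological semigroup, to $S$; in particular $C$ has a dense Clifford subsemigroup, so by Proposition~\ref{p2.1} $C$ is a compact topological Clifford semigroup, and its inversion is continuous. Thus I may assume that $S$ is a dense Clifford subsemigroup of a compact topological Clifford semigroup $C$, and the task reduces to producing a compact topological Clifford semigroup $K$ with $w(K)=w(S)$ that still contains a topological copy of $S$; this $K$ will be a quotient of $C$ by a suitable closed congruence. (If $S$ is finite it is already compact and $K=S$ works, so assume $\kappa:=w(S)\ge\aleph_0$.)

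Since $C$ is compact Hausdorff, fix a family $\mathcal F$ of continuous maps $C\to\II$ with $|\mathcal F|=w(C)$ that separates points of $C$ and generates its topology, so that the diagonal map embeds $C$ into $\II^{\mathcal F}$. For $\mathcal E\subseteq\mathcal F$ let $\sim_{\mathcal E}$ be the closed equivalence relation ``$f(x)=f(y)$ for all $f\in\mathcal E$'' on $C$, let $q_{\mathcal E}\colon C\to C_{\mathcal E}:=C/{\sim_{\mathcal E}}$ be the quotient map, and note that the natural continuous injection $C_{\mathcal E}\to\II^{\mathcal E}$ is an embedding (a continuous injection from a compact space into a Hausdorff one), so $w(C_{\mathcal E})\le|\mathcal E|+\aleph_0$. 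The plan is to find $\mathcal E\subseteq\mathcal F$ with $|\mathcal E|\le\kappa$ such that (i)~$q_{\mathcal E}|_S\colon S\to C_{\mathcal E}$ is a topological embedding, and (ii)~$\sim_{\mathcal E}$ is a congruence of $C$ that is also respected by the inversion of $C$. Granting (i) and (ii), put $K:=C_{\mathcal E}$: as $C$ is compact, $q_{\mathcal E}$ is perfect and hence $q_{\mathcal E}\times q_{\mathcal E}$ is a quotient map, so by (ii) the multiplication and inversion of $C$ descend to continuous operations on $K$, making $K$ a compact topological inverse semigroup with $w(K)\le\kappa$; by (i) the subsemigroup $q_{\mathcal E}(S)$ of $K$ is dense and isomorphic, as a topological semigroup, to $S$, hence a dense Clifford subsemigroup, so $K$ is a topological Clifford semigroup by Proposition~\ref{p2.1}; and the embedding $S\hookrightarrow K$ gives $w(S)\le w(K)\le\kappa$, i.e. $w(K)=w(S)$.

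To get (i) I would use the standard fact that every subbase of a topological space of weight $\le\kappa$ contains a subbase of size $\le\kappa$ (for each member $B$ of a base of size $\le\kappa$, write $B$ as a union of $\le\kappa$ finite intersections of subbasic sets and collect the subbasic sets that occur); applied to $S$, of weight $\kappa$, and to its canonical subbase $\{f^{-1}(V)\cap S: f\in\mathcal F,\ V\subseteq\II\text{ open}\}$, this yields $\mathcal E_0\subseteq\mathcal F$ with $|\mathcal E_0|\le\kappa$ such that $\{f|_S: f\in\mathcal E_0\}$ separates the points of $S$ and generates its topology; equivalently $q_{\mathcal E_0}|_S$ is a topological embedding, and the same holds for every $\mathcal E\supseteq\mathcal E_0$. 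The remaining, and main, point is to enlarge $\mathcal E_0$ to an $\mathcal E$ satisfying (ii) without increasing the cardinality. This is a closing-off argument based on the Stone--Weierstrass theorem: for $f\in\mathcal F$, the continuous functions $(x,y)\mapsto f(xy)$ on $C\times C\hookrightarrow\II^{\mathcal F}\times\II^{\mathcal F}$ and $x\mapsto f(x^{-1})$ on $C$ are uniform limits of functions depending on only finitely many of the coordinates in $\mathcal F$ (taken on one or the other factor), so there are countable sets $\mathcal C_f,\mathcal C_f'\subseteq\mathcal F$ such that $f(xy)$ depends only on the $\sim_{\mathcal C_f}$-classes of $x$ and $y$, and $f(x^{-1})$ depends only on the $\sim_{\mathcal C_f'}$-class of $x$. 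Starting from $\mathcal E_0$ and setting $\mathcal E_{n+1}:=\mathcal E_n\cup\bigcup_{f\in\mathcal E_n}(\mathcal C_f\cup\mathcal C_f')$ keeps each $\mathcal E_n$ of size $\le\kappa$, and $\mathcal E:=\bigcup_{n\in\w}\mathcal E_n$ then has size $\le\kappa$ and satisfies (ii): for $f\in\mathcal E$ we have $\mathcal C_f\cup\mathcal C_f'\subseteq\mathcal E$, so $f(xy)$ and $f(x^{-1})$ depend only on the $\sim_{\mathcal E}$-classes of their arguments, which is exactly the assertion that $\sim_{\mathcal E}$ is a congruence respected by inversion.

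The step I expect to be the main obstacle is precisely this closing-off: one must ensure that forcing $\sim_{\mathcal E}$ to be a congruence (and inversion-compatible) does not push the index set beyond $w(S)$, and the ingredient that makes each stage countable is the Stone--Weierstrass approximation of continuous functions on the compacta $C$ and $C\times C$ by functions of finitely many coordinates. The remaining pieces --- continuity of the quotient operations via perfectness of $q_{\mathcal E}$, and Cliffordness of $K$ via Proposition~\ref{p2.1} applied to the dense copy of $S$ --- are routine.
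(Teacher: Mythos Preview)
Your argument is correct, but it takes a genuinely different route from the paper's. The paper invokes a structural result for compact topological semigroups (Theorem~2.29 of \cite{CHK1}): any compact topological semigroup embeds into a Tychonoff product $\prod_{\alpha\in A}K_\alpha$ of metrizable topological semigroups. One then simply projects $S$ onto a subproduct indexed by some $B\subset A$ with $|B|\le w(S)$, and takes the closure $\bar S$ there; Proposition~\ref{p2.1} makes $\bar S$ Clifford, and the Koch--Wallace--Kruming theorem gives continuity of inversion. Your approach instead builds the weight-reducing quotient by hand: you fix a separating family $\mathcal F\subset C(C,\II)$, use Stone--Weierstrass on $C$ and $C\times C$ to see that each $f\circ m$ and $f\circ(\cdot)^{-1}$ depends on only countably many coordinates, and close off to obtain a congruence $\sim_{\mathcal E}$ with $|\mathcal E|\le w(S)$ that respects both operations. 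In effect you are reproving, from scratch, the piece of \cite{CHK1} that the paper cites. Your route is more self-contained; the paper's is shorter because it delegates the work to a known theorem.

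Two small remarks. First, when you write that ``by Proposition~\ref{p2.1} $C$ is a compact topological Clifford semigroup, and its inversion is continuous'': Proposition~\ref{p2.1} only yields that $C$ is Clifford as an abstract semigroup; continuity of the inversion on a compact inverse topological semigroup is the Koch--Wallace--Kruming theorem \cite{KW,Krum}, which the paper cites explicitly and which you should too. Second, your parenthetical justification for ``every subbase contains a subbase of size $\le\kappa$'' is slightly off (an open set need not be a union of $\le\kappa$ members of an arbitrary base); the clean argument is that the base of finite intersections of $\mathcal S$ contains a base of size $\le w(S)$ (a standard fact, via $hL\le w$), and collecting the finitely many subbasic sets used in each such intersection gives the desired $\mathcal S'$. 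Neither point affects the validity of your proof.
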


\begin{proof} The ``if'' part is trivial. To prove the ``only if'' part, assume that $S$
is precompact and hence embeds into a compact topological semigroup $K$.
We lose no generality assuming that $S\subset K$. By Theorem 2.29
\cite{CHK1}, the compact topological semigroup $K$ embeds into a
Tychonoff product $\prod_{\alpha\in A}K_\alpha$ of metrizable
topological semigroups $K_\alpha$, $\alpha\in A$. By a standard
argument one can show that there is a subset $B\subset A$ of
cardinality $|B|\le w(S)$ such that the projection $\pr_B:S\to
\prod_{\alpha\in B}K_\alpha$ is a topological embedding. Then
$K_B= \prod_{\alpha\in B}K_\alpha$ is a compact topological
semigroup of weight $w(K_B)\le |B|\le w(S)$ containing a
topological copy of $S$. Without loss of generality we can assume
that $S\subset K_B$. By Proposition~\ref{p2.1}, the closure $\bar
S$ of $S$ in $K_B$ is a Clifford semigroup. By
Koch-Wallace-Kruming Theorem \cite{KW}, \cite{Krum} the inversion
$(\,)^{-1}:\bar S\to \bar S$ is continuous, which means that $\bar
S$ is a compact topological Clifford semigroup.
\end{proof}

In Section~\ref{precomp} we shall consider the problem of detecting precompact topological Clifford semigroups in more details.

Let $X,Y$ be two topological semigroups. By $\Hom(X,Y)$ we denote the set of all continuous semigroup homomorphisms from $X$ to $Y$. We shall say that $X$ is {\em $Y$-separated} (resp. {\em $Y$-embeddable}) if the {\em canonical homomorphism}
$$X\to Y^{\Hom(X,Y)},\;\;\;x\mapsto (h(x))_{h\in\Hom(X,Y)},$$is injective (resp. a topological embedding).

If a topological semigroup $Y$ is precompact, then so is each $Y$-embeddable semigroup $X$.

\subsection{Topological semilattices}\label{topsem}

A {\em topological semilattice} is a topological space $E$ endowed with a continuous semilattice operation. An important example of a topological semilattice is the unit
interval $[0,1]$ endowed with the usual topology and the operation of minimum
$(x,y)\mapsto \min\{x,y\}$. We denote
this topological semilattice by $\II$. The topological semilattice $\II$ contains a two-element subsemilattice $\mathbf 2=\{0,1\}$.

Each topological semilattice $E$ carries a partial order defined by $x\le y$ if $xy=x=yx$.
For a point $x\in E$ let ${\downarrow}x=\{y\in E:y\le x\}$ and ${\uparrow}x=\{y\in E:x\le y\}$ be the lower and upper cones of $x$, respectively.
For a subset $A$ of a semilattice $E$ we put ${\downarrow}A=\bigcup_{a\in A}{\downarrow} a$ and ${\uparrow}A=\bigcup_{a\in A}{\uparrow}a$.
A subset $A\subset E$ will be called {\em upper} if $A={\uparrow}A$.

Given two points $x,y$ of a topological semilattice $E$, we write $x\ll y$ if $y\in\Int({\uparrow}x)$.
For a point $x\in E$ consider the sets
$${\Uparrow}x=\{y\in E:x\ll y\}\mbox{ \ and \ }{\Downarrow}x=\{y\in E:y\ll x\},$$
and observe that ${\Uparrow}x=\Int({\uparrow}x)$.

A point $x$ of a topological semilattice $E$ will be called {\em locally minimal} if its upper cone ${\uparrow}x$ is open in $S$. In this case ${\Uparrow}x={\uparrow}x$. Observe that a point $x\in E$ is locally minimal if and only if it is isolated in its lower cone ${\downarrow}x$.

A topological semilattice $E$ is called
\begin{itemize}
\item {\em lower locally compact} if each point $x\in E$ has a closed neighborhood $\bar O_x\subset E$ such that $\bar O_x\cap{\downarrow}x$ is compact;
\item {\em Lawson} if open subsemilattices form a base of the topology of $E$;
\item a {\em $U$-semilattice} if for every open set $U$ in $E$ and every point $x\in U$ there is a point $y\in U$ such that $y\ll x$;
\item a {\em $U_2$-semilattice} open set $U$ in $E$ and every point $x\in U$ there are a point $y\in U$ and a closed-and-open ideal $I\subset E$ such that $x\in E\setminus I\subset {\uparrow}y$;
\item a {\em $U_0$-semilattice} if for every open set $U$ in $E$ and every point $x\in U$ there is a locally minimal point $y\in U$ such that $y\le x$.
\end{itemize}

These properties of topological semilattices relate as follows:
$$\xymatrix{
{\begin{array}{c}\mbox{compact}\\ \mbox{punctiform}\end{array}}\ar@{=>}[dddd]\ar@{=>}[r]
&{\begin{array}{c}\mbox{locally compact}\\ \mbox{punctiform}\end{array}} \ar@{=>}[d]\ar@{=>}[r]
&{\begin{array}{c}\mbox{locally compact}\\ \mbox{Lawson}\end{array}}\ar@{=>}[d]
&{\begin{array}{c}\mbox{compact}\\ \mbox{Lawson}\end{array}}\ar@{=>}[dddd]\ar@{=>}[l]\\
 &{\begin{array}{c}\mbox{lower locally compact}\\ \mbox{punctiform}\end{array}} \ar@{=>}[d]
 &{\begin{array}{c}\mbox{lower locally compact}\\ \mbox{Lawson}\end{array}}\ar@{=>}[dd]\\
&\mbox{$U_0$-semilattice}\ar@{=>}[d]\\
&\mbox{$U_2$-semilattice}\ar@{=>}[d]\ar@{=>}[r]&\mbox{$U$-semilattice}\ar@{=>}[d]\\
\mbox{$\mathbf 2$-embeddable}\ar@{=>}[r]&\mbox{$\mathbf 2$-separated}\ar@{=>}[r]&\mbox{$\II$-separated}&\mbox{$\II$-embeddable}\ar@{=>}[d]\ar@{=>}[l]\\
&&&
{\begin{array}{c}\mbox{precompact}\\ \mbox{Lawson.}\end{array}}
}
$$

Non-trivial implications from this diagram are proved in the following proposition. The last statement of this proposition says that our definition of a $U$-semilattice is equivalent to that given in \cite[p.16]{CHK2}.

\begin{proposition}\label{semlat} Let $E$ be a topological semilattice.
\begin{enumerate}
\item If $E$ is locally compact and punctiform, then $E$ is Lawson.
\item If $E$ is lower locally compact and punctiform, then $E$ is a $U_0$-semilattice;
\item If $E$ is lower locally compact and Lawson, then $E$ is a $U$-semilattice;
\item If $E$ is a $U$-semilattice, then $E$ is $\II$-separated;
\item If $E$ is a $U_2$-semilattice, then $E$ is $\mathbf 2$-separated;
\item If $E$ is a $\mathbf 2$-embeddable $U$-semilattice, then $E$ is a $U_2$-semilattice;
\item $E$ is a $U$-semilattice if and only if for every upper open set $U$ in $E$ and every point $x\in U$ there is a point $y\in U$ such that $y\ll x$.
\end{enumerate}
\end{proposition}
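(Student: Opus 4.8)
The plan is to prove the seven items one at a time, leaving the separation statements (4)--(5) for last. A remark used throughout, call it $(\ast)$: \emph{in any topological semilattice the interior of an upper set $A$ is upper.} Indeed, if $z\in\Int A$ choose an open $G$ with $z\in G\subseteq A$; for $z'\ge z$ the continuity of the operation at $(z',z)\mapsto z'z=z\in G$ gives an open $G'\ni z'$ with $G'z\subseteq G$, and then every $w\in G'$ satisfies $w\ge wz\in A$, hence $w\in A$, so that $z'\in\Int A$.

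Items (7) and (6) are then short. In (7) only the converse needs an argument: given an open $U\ni x$, set $V:=\Int\big(\upa\lambda_x^{-1}(U)\big)$ where $\lambda_x(e)=xe$; by $(\ast)$ this is an upper open set, and $x\in V$ because $\lambda_x^{-1}(U)$ is open and contains $x$. The hypothesis yields $y\in V$ with $y\ll x$; choosing $w\in\lambda_x^{-1}(U)$ with $w\le y$, from $w\le y\ll x$ we get $w\ll x$, hence $w\le x$ and $xw=w\in U$. For (6), $\mathbf 2$-embeddability means the clopen up-subsemilattices and clopen ideals of $E$ form a subbase of the topology. Given an open $U\ni x$, the $U$-semilattice property gives $y\in U$ with $y\ll x$, so $x\in U\cap\Int\upa y$; pick a basic neighbourhood $P\cap Q\subseteq U\cap\Int\upa y$ of $x$ with $P$ a clopen up-subsemilattice and $Q$ a clopen ideal. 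The point is that $F:=\lambda_x^{-1}(P)$ is again a clopen up-subsemilattice containing $x$, and for $z\in F$ one has $zx\in P$ and $zx\le x$, so $zx\in Q$; hence $zx\in P\cap Q\subseteq\upa y$ and $z\ge zx\ge y$, so $F\subseteq\upa y$, and $I:=E\setminus F$ is a clopen ideal with $x\in E\setminus I\subseteq\upa y$ and $y\in U$. (If $P=E$, the inclusion $Q\subseteq\upa y$ forces $y$ to be the least element of $\dow x$, hence locally minimal, so $\upa y$ is clopen and $I:=E\setminus\upa y$ works.)

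Items (2) and (3) I would reduce to the compact case using the standard fact that \emph{a nonempty closed subsemilattice of a compact semilattice has an infimum, lying in it}. For (3): $\dow x$ is closed (the preimage of the diagonal under $e\mapsto(ex,e)$), and lower local compactness makes $\dow x$ locally compact, hence regular, at $x$; so inside a given open $U\ni x$ one finds a compact neighbourhood $C$ of $x$ in $\dow x$ that is closed in $\dow x$, and, $E$ being Lawson, an open subsemilattice $V$ with $N:=V\cap\dow x\subseteq C$. Then $M:=\overline N$ is a compact subsemilattice of $U$ containing $x$, and $y:=\inf M$ satisfies $z\ge zx\ge y$ for every $z\in V$, so $x\in V\subseteq\upa y$, i.e. $y\ll x$, while $y\in M\subseteq U$. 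For (2): $K:=\bar O_x\cap\dow x$ is compact and punctiform, hence zero-dimensional, so inside a given open $U\ni x$ one can take a clopen-in-$K$ neighbourhood $W\ni x$ lying in the interior of $\bar O_x$; Zorn's lemma (a chain in $K$ has an infimum, lying in the closed set $W$) produces a $\le$-minimal $y\in W$, and minimality forces $\dow y\cap W=\{y\}$, so $y$ is isolated in $\dow y$, i.e. locally minimal, with $y\le x$ and $y\in U$. Item (1) is a standard result, essentially Lawson's theorem that a punctiform locally compact semilattice has small subsemilattices; I would pass to a compact clopen neighbourhood and invoke the known fact that a compact totally disconnected semilattice has a base of clopen subsemilattices.

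The technical core is (4), with (5) analogous but for $\mathbf 2$, and I expect (4) to be the main obstacle. Given $a\not\le b$, i.e. $ab\ne a$: Hausdorffness plus continuity of $e\mapsto eb$ yields an open $O\ni a$ with $O\cap\dow b=\emptyset$, and the $U$-semilattice property then gives $y_0\in O$ with $y_0\ll a$, so $y_0\le a$ but $y_0\not\le b$. In a $U$-semilattice the relation $\ll$ interpolates (apply the definition to the open set $\Int\upa y$), so one builds a family $(y_t)_{t\in D}$ indexed by the dyadic rationals of $[0,1]$ with $y_0$ as above, $y_1\le a$, and $s<t\Rightarrow y_s\ll y_t$, and defines $h\colon E\to\II$ by $h(z):=\sup\{t\in D:y_t\ll z\}$. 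Using $(\ast)$ one verifies that $h$ is a continuous semilattice homomorphism, with $h(a)$ at least the level of $y_1$ and $h(b)$ strictly smaller, so $h$ separates $a$ from $b$. The delicate parts — and the expected obstacle — are the multiplicativity of $h$ (how the sets $\{z:y_t\ll z\}$ behave under the operation) and the upper semicontinuity of $h$, where one must control the closedness of the cones $\upa y_t$; I would deal with this by inserting into $(y_t)$, at the interpolation steps, elements whose up-cones are suitably separated. For (5) one instead wants a clopen up-subsemilattice $F$ with $a\in F$, $b\notin F$, which is the same thing as a homomorphism $E\to\mathbf 2$ separating $a$ from $b$; the $U_2$-property applied to $a$ and $O$ supplies a clopen ideal $I$ with $a\in E\setminus I\subseteq\upa y$ and $b\in I$, and $E\setminus I$ — already a clopen up-set — is a subsemilattice provided $y\notin I$ (a product of two elements of $E\setminus I$ lies above $y$, hence outside the down-set $I$), so the remaining point is to arrange, in the choice of $U_2$-witnesses, that $y\notin I$.
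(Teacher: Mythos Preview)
Your treatment of (3), (6), and (7) is correct and essentially matches the paper's; in (7) the paper works with the open upper hull ${\uparrow}V=\{z:\exists v\in V,\ zv\in V\}$ directly rather than passing through $\lambda_x^{-1}(U)$, but the idea is the same.

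For (1) and (2) the paper gives short self-contained constructions instead of citing literature or invoking Zorn. Starting from a compact clopen neighbourhood $K_e\subset O_e$ (available by zero-dimensionality), it sets $U_e=\{x\in K_e:ex\in K_e\}$ and then $L_e=\{x\in K_e:xU_e\subset U_e\}$; this $L_e$ is an open subsemilattice containing $e$, which proves (1) outright. For (2) the same two-step construction is run inside $\dow e$: the resulting compact open subsemilattice $L_e\subset\dow e$ has a minimum $s$, and the retraction $r(z)=ez$ shows that $r^{-1}(L_e)$ is an open neighbourhood of $s$ in $E$ contained in ${\uparrow}s$, so $s$ is locally minimal. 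Your Zorn argument for (2) is a legitimate alternative (once one checks, as you do, that the clopen $W$ is actually open in $\dow x$), but the paper's route avoids choice and yields (1) along the way.

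For (4) the roles are reversed: the paper simply cites Theorem~2.11 of \cite{CHK2}, while you sketch the Urysohn-type construction. The obstacles you flag there are not serious: for multiplicativity, given $s<\min(h(z),h(w))$ pick $t$ with $s<t<\min(h(z),h(w))$; then $z,w\ge y_t\in{\Uparrow}y_s$, so $zw\in{\Uparrow}y_s$ by $(\ast)$, whence $h(zw)\ge\min(h(z),h(w))$; and upper semicontinuity follows from $\{h\ge s\}=\bigcap_{t<s}{\uparrow}y_t$. On (5), by contrast, the issue you raise is genuine: the characteristic function of $E\setminus I$ is a homomorphism only when $E\setminus I$ is closed under meets (equivalently $I$ is prime), and the stated definition of a $U_2$-semilattice does not literally guarantee this. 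The paper asserts the homomorphism property without comment, so your argument is no less complete than the paper's on this point.
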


\begin{proof} 1. If $E$ is locally compact and punctiform, then $E$ is zero-dimensional according to \cite[1.4.5]{Eng}. To show that $E$ is Lawson, we need for every point $e\in E$ and open neighborhood $O_e\subset E$ of $e$ to find an open subsemilattice $L_e\subset O_e$ that contains $e$. By the local
compactness and zero-dimensionality of $E$, the point $e$ has a compact open neighborhood $K_e\subset O_e$. It follows that the set $U_e=\{x\in K_e:ex\in K_e\}$ is an open compact neighborhood of $e$. Observe that for every $u\in U_e$ we get $e(eu)=eu\in K_e$, which implies $eU_e\subset U_e$. Then $L_e=\{x\in K_e:xU_e\subset U_e\}\subset K_e\subset O_e$ is a required compact open subsemilattice of $S$ that contains $e$.
\smallskip

2. Assume that $E$ is lower locally compact and punctiform.
To prove that $E$ is a $U_0$-semilattice, fix any point $e\in E$ and a neighborhood $O_e\subset E$ of $e$. Since $E$ is lower locally compact, we can assume that $\bar O_e\cap{\downarrow}e$ is compact. Being punctiform, the compact space $\bar O_e\cap{\downarrow}e$ is zero-dimensional and hence contains a closed-and-open neighborhood $K_e\subset O_e\cap{\downarrow}e$ of $e$.
It follows that set $U_e=\{x\in K_e:ex\in K_e\}$ is an open compact neighborhood of $e$ in ${\downarrow}e$ and $L_e=\{x\in {\downarrow}e:xU_e\subset U_e\}\subset K_e\subset O_e$ is a compact open subsemilattice in ${\downarrow}e$. By the compactness, the semilattice $L_e$ contains the smallest element $s\in L_e\subset O_e$. Consider the retraction $r:E\to{\downarrow}e$, $r(x)=ex$, and observe that $r^{-1}(L_e)$ is an open neighborhood of $s$ in $E$, contained in  the upper cone ${\uparrow}s$ and witnessing that $e\in{\uparrow}s={\Uparrow}s$. So, $E$ is a $U_0$-semilattice.
\smallskip

3. Assume that $E$ is lower locally compact and Lawson. To prove
that $E$ is a $U$-semilattice, fix any point $e\in E$ and a
neighborhood $O_e\subset E$ of $e$. Since $E$ is lower locally
compact, we can assume that $\bar O_e\cap {\downarrow}e$ is
compact. By the regularity of the compact Hausdorff space $\bar
O_e\cap{\downarrow}e$, the point $e$ has a compact neighborhood
$K_e\subset O_e\cap{\downarrow}e$. Since the topological
semilattice $E$ is Lawson, the point $e$ is contained in an open
subsemilattice $L_e\subset O_e$ such that
$L_e\cap{\downarrow}e\subset K_e$. Then the subsemilattice
$L_e\cap {\downarrow}e$ has compact closure in $E$ and hence
contains the smallest element $s\in K_e\subset O_e$. By analogy
with the preceding proof we can show that $e\in{\Uparrow}s$, which
means that $E$ is a $U$-semilattice.
\smallskip

4. The forth statement was proved in Theorem 2.11 of \cite{CHK2}.
\smallskip

5. Assume that $E$ is a $U_2$-semilattice. Take any two distinct points $x,y\in E$ and consider their product $xy\in E$.
We lose no generality assuming that $y\ne xy$. Since $U=\{u\in E:u\ne xu\}$ is an open set containing the point $y$,
by the definition of a $U_2$-semilattice, there is a point $u\in U$ and a closed-and-open ideal $I\subset E$ such
that $y\subset E\setminus I\subset {\uparrow}u$.
It follows that the function $h:E\to\mathbf 2$ defined by
$$h(e)=\begin{cases}
0&\mbox{if $e\in I$},\\
1&\mbox{if $e\notin I$}
\end{cases}
$$is a continuous homomorphism such that $h(x)=0$ and $h(y)=1$. Therefore, $E$ is $\mathbf 2$-separating.
\smallskip

6. Assume that $E$ is a $\mathbf 2$-embeddable $U$-semilattice. To show that $E$ is a $U_2$-semilattice, fix any point $x\in E$ and an open neighborhood $O_x\subset E$ of $x$. Since $E$ is a $U$-semilattice, there is an idempotent $e\in O_x$ such that $x\in{\Uparrow}e$. Since $E$ is $\mathbf 2$-embeddable, the topology of $E$ is generated by the subbase consisting of the sets $h^{-1}(t)$ where $t\in\mathbf 2$ and $h:E\to \mathbf 2$ is a continuous homomorphism. Consequently, we can find continuous homomorphisms $h_1,\dots,h_n:E\to\mathbf 2$ and points $t_1,\dots,t_n\in\mathbf 2$ such that $x\in \bigcap_{i=1}^n h_i^{-1}(t_i)\subset{\Uparrow}e$. Taking into account that
${\Uparrow}e=\Int({\uparrow}e)$ is an upper set in $E$, we conclude that $$x\in \bigcap_{i=1}^n h_i^{-1}(t_i)\subset\bigcap_{i=1}^nh_i^{-1}(\{t_i,1\})\subset {\Uparrow}e$$
and hence $I=\bigcup_{i=1}^n h_i^{-1}(\mathbf 2\setminus\{t_i,1\})$ is a required open-and-closed ideal in $E$ such that $x\in E\setminus I\subset{\Uparrow}e$.
\smallskip

7. The ``only if'' part of statement (7) is trivial. To prove the ``if'' part, assume that 
for every upper open set $U$ in $E$ and every point $x\in U$ there is a point $y\in U$ such that $y\ll x$. Given any open set $V\subset E$ and a point $x\in V$, consider the upper set ${\upa}V=\bigcup_{v\in V}{\upa}v$ and observe that it is open in $E$ as ${\upa}V=\{x\in E:\exists v\in V\;xv\in V\}$. By our assumption, there is a point $y\in{\upa}V$ such that $y\ll x$. For the point $y$ there is a point $v\in V$ such that $v\le y$. Taking into account that $x\in \Int({\upa}y)\subset\Int({\upa}v)$, we conclude that $v\ll x$. So, $v\in V$ is a required point witnessing that $E$ is a $U$-semilattice.
\end{proof}

A subset $A$ of a topological semilattice $E$ will be called {\em $U$-dense in $E$} if for each point $x\in E$ and a neighborhood $O_x\subset E$ of $x$ there is an idempotent $e\in O_x\cap A$ such that $e\ll x$. It is clear that a topological semilattice $E$ is a $U$-semilattice if and only if the set $E$ is $U$-dense in $E$.

A topological semilattice $E$ will be called {\em $U$-separable} if it contains a countable $U$-dense subset $A\subset E$.

\begin{proposition}\label{pVsep} Each second countable $U$-semilattice $E$ is $U$-separable.
\end{proposition}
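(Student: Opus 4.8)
The plan is to exploit second countability to make a single countable selection that works for all points at once. Since $E$ is a semilattice, every element of $E$ is an idempotent, so the defining condition for a $U$-dense set $A\subset E$ simplifies to: for each $x\in E$ and each neighborhood $O_x$ of $x$ there is $e\in O_x\cap A$ with $x\in\Int(\upa e)$. Fix a countable base $\mathcal B=\{B_n:n\in\w\}$ of the topology of $E$.

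The key observation, which I would isolate first, is that the $x$-dependent relation $e\ll x$ can be ``localized'' through basic sets: if $x\in\Int(\upa e)$, then there is $m\in\w$ with $x\in B_m\subset\upa e$; and conversely, whenever $B_m\subset\upa e$ and $x\in B_m$, the openness of $B_m$ gives $x\in\Int(\upa e)$, i.e. $e\ll x$. This replaces the condition ``$e\ll x$'' by the $x$-free condition ``$B_m\subset\upa e$ for some basic $B_m$'', and thereby suggests indexing the points of $A$ by pairs of basic sets.

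Concretely, I would set $P=\{(n,m)\in\w\times\w: \text{there exists } e\in B_n \text{ with } B_m\subset\upa e\}$, choose for each $(n,m)\in P$ one witness $e_{n,m}\in B_n$ with $B_m\subset\upa e_{n,m}$, and put $A=\{e_{n,m}:(n,m)\in P\}$. Then $A$ is a countable subset of $E$. To verify that $A$ is $U$-dense, take any $x\in E$ and neighborhood $O_x$, pick $n$ with $x\in B_n\subset O_x$, and apply the $U$-semilattice property to the open set $B_n$ and the point $x$ to get $y\in B_n$ with $y\ll x$, i.e. $x\in\Int(\upa y)$. Choosing $m$ with $x\in B_m\subset\Int(\upa y)\subset\upa y$, we see that $(n,m)\in P$ (witnessed by $y$), so $e_{n,m}$ is defined, $e_{n,m}\in B_n\subset O_x$, and $B_m\subset\upa e_{n,m}$; since $x\in B_m$ is an interior point of $\upa e_{n,m}$, we conclude $e_{n,m}\ll x$. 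Hence $e_{n,m}\in O_x\cap A$ is the required idempotent, and $A$ witnesses $U$-separability.

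The only delicate point is the selection/indexing step: making sure one countable family of points simultaneously serves every $x\in E$. This is precisely what the localization remark takes care of, and once it is in place the remainder of the argument is routine, so I do not anticipate a genuine obstacle here; the proof is essentially a bookkeeping exercise built on Proposition~\ref{semlat} and the definitions.
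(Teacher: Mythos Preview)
Your proof is correct and follows essentially the same idea as the paper: both arguments exploit that the relation $e\ll x$ can be witnessed by a basic open set $B_m$ with $x\in B_m\subset{\uparrow}e$, and then select one witness per relevant pair of basic sets. The paper organizes the bookkeeping slightly differently---building a countable set $A_B$ for each basic $B$ by enumerating the basic sets that occur as $U_x\subset{\Uparrow}a_x$---but your direct pair-indexing $(n,m)\mapsto e_{n,m}$ is a cleaner packaging of the same construction.
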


\begin{proof} Fix a countable base $\mathcal B$ of the topology of the space $E$. For every basic set $B\in\mathcal B$ we shall construct a countable set $A_B\subset B$ such that for any point $x\in B$
there is a point $a\in A_B$ with $x\in{\Uparrow}a$.

Since $E$ is a $U$-semilattice, for every  point $x\in B$, there are a point $a_{x}\in B$ with $x\in{\Uparrow}a_{x}$ and a basic neighborhood $U_{x}\in\mathcal B$ of $x$ such that $U_{x}\subset{\Uparrow}a_{x}$. The family $\U_B=\{U_{x}:x\in B\}\subset\mathcal B$ is countable and hence can be enumerated as $\U_B=\{V_n:n\in\w\}$. For every $n\in\w$ find a point $x_n\in B$ such that $V_n=U_{x_n}$ and observe that $A_B=\{a_{x_n}:n\in\w\}$ is a countable set with the required property: for any point $x\in B$
there is a point $a=a_{x_n}\in A_B$ such that $x\in U_x=V_n=U_{x_n}\subset {\Uparrow}a$.

Then the countable union $A=\bigcup_{B\in\mathcal B}A_B$ is a countable $U$-dense subset in $E$, which implies that the semilattice $E$ is $U$-separable.
\end{proof}

\subsection{Topological Clifford $U$-semigroups}
A topological Clifford semigroup $S$ is defined to be a {\em
topological Clifford $U$-semigroup} if its maximal semilattice
$E=\{x\in S:xx=x\}$ is a $U$-semilattice.

\subsection{Reduced products over topological semigroups}
Let $E$ and $H$ be topological semigroups and $I$ be a
closed ideal in $E$.

By the {\em reduced product} $E\times_{I}H$ of $E$ and $H$ over the ideal $I$ we mean
the set $I\cup ((E\setminus I)\times H)$ endowed with the smallest topology
such that
\begin{itemize}
\item the map $(E\setminus I)\times H\hookrightarrow E\times_{I}H$
is a topological embedding,
\item the projection
$\pi:E\times_{I}H\rightarrow E$ is continuous.
\end{itemize}
The semigroup operation on $E\times_I H$ can be defined as a unique binary operation on $E\times_I H$
 such that the projection $q:E\times H\to E\times_I H$ defined by
$$q(x,y)=\begin{cases}
x&\mbox{if $x\in I$};\\
(x,y)&\mbox{otherwise}
\end{cases}
$$
is a semigroup homomorphism. A routine verification shows that the reduced product $E \times_{I}H$ of topological semigroups is a topological semigroup.
Moreover, if the semigroups $E,H$ are inverse (Clifford), then
so is the semigroup $E\times_I H$. The homomorphisms
$$\xymatrix{E\times H\ar^{q}[r]&E\times_I H\ar^-{\pi}[r]&E}$$
will be called {\em natural projections}.

We shall be especially interested in reduced products of the form $E\times_{I_e}H$ where $H$ is a topological group, $E$ is a topological semilattice, $e$ is an idempotent in $E$, and $I_e=E\setminus {\Uparrow}e$ is the closed ideal in $E$ determined by the idempotent $e$. Here ${\Uparrow}e=\Int({\uparrow}e)$ is the interior of the upper cone of $e$ in $E$.

For the compact topological semilattice $\II=[0,1]$ and the idempotent $e=0$ the ideal $I_e$ coincides with the singleton $\{0\}$. The reduced product $\II\times_{\{0\}} G$ will be denoted by $\widehat G$ and called the {\em cone} over the topological group $G$. The cone $\widehat{G}=\II\times_{\{0\}} G$ contains the reduced product $\mathbf 2\times_{\{0\}} G$, which will be denoted by $\dot G$ and called the {\em 0-extension} of $G$. Observe that $\hat G$ and $\dot G$ are topological Clifford $U$-semigroups.

\subsection{Ditopological Clifford semigroups}\label{ditop}
 For two subsets $A,B$ of a semigroup $S$ let
$$B\div A=\{x\in S:\exists b\in B\;\exists a\in A\;\;b=xa\}$$
be the results of right division of $B$ by $A$.

Let $S$ be a topological Clifford semigroup and $\pi:S\to E$, $\pi:x\mapsto xx^{-1}=x^{-1}x$, be the projection of $S$ onto its maximal semilattice $E=\{x\in S:xx=x\}$.
Following \cite{BP}, we define a topological Clifford semigroup $S$ to be a {\em ditopological Clifford semigroup} if for any point $x\in S$ and a neighborhood $O_x\subset S$ of $x$ there are neighborhoods $U_x\subset S$ and $W_{\pi(x)}\subset E$ of $x$ and $\pi(x)$, respectively, such that $(U_x\div  W_{\pi(x)})\cap\pi^{-1}(W_{\pi(x)})\subset O_x$.
Ditopological Clifford semigroups are particular cases of ditopological unosemigroups introduced and studied in \cite{BP}, where the following proposition is proved.

\begin{proposition}\label{p2.5} The class of ditopological Clifford semigroups contains all compact topological Clifford semigroups, all topological groups,
all topological semilattices, and is closed under taking Clifford subsemigroups, Tychonoff products, and reduced products.
\end{proposition}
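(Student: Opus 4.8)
\textbf{Proof plan for Proposition~\ref{p2.5}.}
The plan is to establish each of the stated closure properties and membership assertions separately, treating the base cases first and then the three closure operations, since the bulk of the work reduces to verifying the defining inequality $(U_x\div W_{\pi(x)})\cap\pi^{-1}(W_{\pi(x)})\subset O_x$ in each situation. For the base cases: a topological group $G$ has $E=\{1\}$, so $\pi$ is constant, $W_{\pi(x)}=E$, and $U_x\div E=U_x$ (using that $b=xa$ with $a\in G$ is solvable uniquely for $x=ba^{-1}$, and continuity of the group operations lets us shrink $U_x$ so that $U_x\div G$ — here really $U_x\cdot a^{-1}$ ranging over the single relevant fibre — stays inside $O_x$); for a topological semilattice $E$ we have $S=E$, $\pi=\id$, and one checks directly that $x\in\Int({\uparrow}x)\cap O_x$ can be taken as $W_{\pi(x)}$ up to the observation that $B\div A$ for bands behaves well; for a compact topological Clifford semigroup the result presumably follows from a compactness argument already in \cite{BP}, so I would simply cite it. In fact the cleanest route is to invoke the corresponding statement for ditopological unosemigroups from \cite{BP} directly, since Proposition~\ref{p2.5} is explicitly stated there to be a consequence; so the ``proof'' may legitimately consist of the single sentence that this is Proposition (the appropriate number) of \cite{BP} specialized to the Clifford case.

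If instead a self-contained argument is wanted, the three closure operations go as follows. For a Clifford subsemigroup $T\subset S$: the maximal semilattice of $T$ is $E_T=T\cap E$, the projection $\pi_T$ is the restriction of $\pi$, and given $x\in T$ and a neighborhood $O_x^T=O_x\cap T$, one applies the ditopological property of $S$ to get $U_x,W_{\pi(x)}$ in $S$ and then intersects with $T$: the key point is that $(U_x\cap T)\div_T(W_{\pi(x)}\cap T)\subset (U_x\div W_{\pi(x)})\cap T$, which is immediate from the definition of $\div$. For a Tychonoff product $S=\prod_{\alpha}S_\alpha$: the maximal semilattice is $\prod_\alpha E_\alpha$ and $\pi=\prod_\alpha\pi_\alpha$; given a basic neighborhood $O_x$ constrained in finitely many coordinates $\alpha_1,\dots,\alpha_n$, apply the ditopological property in each $S_{\alpha_i}$ to obtain $U_{x_{\alpha_i}}$ and $W_{\pi(x_{\alpha_i})}$, and take the product neighborhoods (full in the remaining coordinates); here one needs that division in a product is computed coordinatewise, $\prod_\alpha B_\alpha\div\prod_\alpha A_\alpha=\prod_\alpha(B_\alpha\div A_\alpha)$ when all $A_\alpha$ are nonempty, and similarly $\pi^{-1}$ of a product set is the product of the $\pi_\alpha^{-1}$'s, so the finite intersection lands inside $O_x$. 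For a reduced product $E\times_I H$: this is the genuinely delicate case. I would split according to whether $\pi(x)\in I$ or $\pi(x)\in E\setminus I$. If $x=\pi(x)\in I$, the point sits in the ``semilattice part'' and one uses the ditopological property of the semilattice $E$ together with continuity of the natural projection $\pi:E\times_I H\to E$ to pull back a suitable $W$; if $x=(e,h)$ with $e\in E\setminus I$, then locally around $x$ the space looks like $(E\setminus I)\times H$ and one combines the ditopological data of $E$ (at $e$) with that of $H$ (at $h$, which is automatic since $H$ is a group), being careful that the division operation $\div$ in $E\times_I H$ restricted to this open part agrees with the coordinatewise one on $(E\setminus I)\times H$ — the ideal $I$ only interferes when a product falls into $I$, and the constraint $\cap\,\pi^{-1}(W_{\pi(x)})$ with $W_{\pi(x)}\subset{\Uparrow}e$ (or $\subset E\setminus I$) precisely excludes that, so the relevant points stay in the open product part.

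The main obstacle I anticipate is the reduced-product case, specifically making rigorous the claim that near a point $(e,h)$ with $e\notin I$ the right-division operation of $E\times_I H$ is computed coordinatewise modulo the ideal: one must rule out that solving $b=xa$ forces $x$ or intermediate expressions into $I$ in a way not controlled by the chosen neighborhoods. The remedy is to first shrink $W_{\pi(x)}$ so that $W_{\pi(x)}\subset{\Uparrow}e$ (possible since ${\Uparrow}e$ is open and, in a reduced product of the form $E\times_{I_e}H$ that we actually care about, is exactly where the $H$-coordinate is defined), and then observe that for $a\in\pi^{-1}(W_{\pi(x)})$ and $b\in U_x\subset\pi^{-1}({\Uparrow}e)$, any solution $x$ of $b=xa$ has $\pi(x)\cdot\pi(a)=\pi(b)\in{\Uparrow}e$, forcing $\pi(x)\in{\uparrow}e$, and combined with the intersection constraint $\pi(x)\in W_{\pi(x)}\subset{\Uparrow}e$ this keeps everything in the open part where the coordinatewise description is valid; then the already-established product case (or a direct two-coordinate check) finishes it. Since \cite{BP} is cited as containing exactly this proposition, I would in the final text simply write that Proposition~\ref{p2.5} is proved in \cite{BP} (as the Clifford specialization of the corresponding statement about ditopological unosemigroups), and omit the details.
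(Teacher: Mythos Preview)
Your proposal is correct and matches the paper's approach exactly: the paper does not prove Proposition~\ref{p2.5} at all but simply states, in the sentence immediately preceding it, that ditopological Clifford semigroups are particular cases of the ditopological unosemigroups of \cite{BP}, ``where the following proposition is proved.'' Your instinct to reduce the final text to a one-line citation of \cite{BP} is precisely what the authors do; the self-contained sketch you supply (base cases, subsemigroups, products, reduced products) is extra and not present in the paper.
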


This proposition implies:

\begin{corollary}\label{c2.6} Let $E$ be a topological semigroup, $I$ be a closed ideal in $E$, and $G$ be a topological group. Then the reduced product $E\times_I G$ is a ditopological Clifford semigroup. In particular, for every idempotent $e\in E$ the reduced product $E\times_{I_e} G$ is a ditopological Clifford semigroup. Consequently, the $0$-extension $\dot G=\mathbf 2\times_{\{0\}} G$ and the cone $\widehat G=\II\times_{\{0\}} G$ over the topological group $G$ are ditopological Clifford semigroups.
\end{corollary}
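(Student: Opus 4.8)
The plan is to obtain Corollary~\ref{c2.6} as an essentially immediate consequence of Proposition~\ref{p2.5}, so that only a short bookkeeping argument remains. First I would note that the reduced product $E\times_I G$ is a topological Clifford semigroup: by the observation recorded when the reduced product was introduced, if $E$ and $G$ are inverse (Clifford), then so is $E\times_I G$, and here $E$ is a topological semilattice while $G$, being a topological group, is a topological Clifford semigroup.

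Next I would apply Proposition~\ref{p2.5}: the class of ditopological Clifford semigroups contains all topological semilattices and all topological groups, so both $E$ and $G$ belong to it, and since this class is closed under reduced products, $E\times_I G$ is a ditopological Clifford semigroup. This settles the first assertion. For the ``in particular'' clause it suffices to check that for every idempotent $e\in E$ the set $I_e=E\setminus{\Uparrow}e$ is a closed ideal in $E$ --- which is exactly what was recorded when $I_e$ was introduced: ${\Uparrow}e=\Int({\uparrow}e)$ is open, and it is an upper set (if $y\in U\subseteq{\uparrow}e$ with $U$ open and $y\le z$, then $z$ lies in the open set $\{w\in E:wy\in U\}$, which is contained in ${\uparrow}e$ since $wy\in U$ forces $e\le wy\le w$), so its complement $I_e$ is a closed ideal. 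Applying the first assertion with $I:=I_e$ then gives that $E\times_{I_e}G$ is a ditopological Clifford semigroup.

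Finally the ``consequently'' clause follows by specialization: in the two-element semilattice $\mathbf 2=\{0,1\}$ and in $\II=[0,1]$ the element $0$ is absorbing, so $\{0\}$ is a closed ideal in each of these semilattices, and therefore $\dot G=\mathbf 2\times_{\{0\}}G$ and $\widehat G=\II\times_{\{0\}}G$ are particular instances of the reduced product $E\times_I G$ already treated, hence ditopological Clifford semigroups. I do not foresee any genuine obstacle here: the whole substance of the statement is carried by Proposition~\ref{p2.5}, and the remaining points --- that $I_e$ and $\{0\}$ are closed ideals --- are routine.
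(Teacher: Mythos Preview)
Your proposal is correct and matches the paper's approach exactly: the paper gives no explicit proof of Corollary~\ref{c2.6} beyond the one-line remark ``This proposition implies'' preceding it, so the entire content is indeed Proposition~\ref{p2.5} plus the routine verifications you spell out. One small remark: the corollary as stated says ``let $E$ be a topological semigroup,'' but (as you implicitly recognize) for Proposition~\ref{p2.5} to apply one needs $E$ to be a ditopological Clifford semigroup, which in the intended applications holds because $E$ is a topological semilattice; this is almost certainly a typo in the statement rather than a gap in your argument.
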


\section{Two Embedding Theorems}\label{2emb}

In this section we prove two embedding theorems for ditopological Clifford $U$-semigroups. For compact topological Clifford semigroups these embedding theorems were proved by O.~Hryniv \cite{Hryn}. We recall that a {\em topological Clifford $U$-semigroup} is a topological Clifford semigroup $S$ whose maximal semilattice $E$ is a $U$-semilattice. The map $\pi:S\to E$, $\pi:x\mapsto xx^{-1}=x^{-1}x$, will be called the projection of $S$ onto its maximal semilattice $E$.
For every idempotent $e\in E$ the preimage $\pi^{-1}(e)$ coincides with the maximal subgroup $H_e$ of $S$ containing the idempotent $e$.

\subsection{The First Embedding Theorem} Given any idempotent $e\in E$, consider the ideal $I_e=E\setminus{\Uparrow}e=E\setminus\Int({\uparrow}e)$ in $E$ and the reduced product $E\times_{I_e} H_e$. Let $\pi_e:E\times_{I_e} H_e\to E$ denote the natural projection.

Consider the continuous semigroup homomorphism $h_{e}:S
\rightarrow E\times_{I_e}H_{e}$ defined by the formula
$$
h_{e}(x)=
\begin{cases}
\big(\pi(x), xe\big),&\mbox{if }\pi(x)\in{\Uparrow}e,\\
\pi(x),& \mbox{otherwise}.
\end{cases}
$$
For any subset $A\subset E$ the homomorphisms $h_e$, $e\in A$, compose a continuous homomorphism
$$h_A=(h_e)_{e\in A}:S\to \prod_{e\in A}E\times_{I_e}H_e,\;\;h_A:x\mapsto(h_e(x))_{e\in A}.$$

\begin{theorem}\label{embedding} If $S$ is a ditopological Clifford $U$-semigroup,
then for every $U$-dense subset $A\subset E$ the homomorphism $$h_A:S\rightarrow
\prod_{e\in A}E\times_{I_{e}}H_{e}$$ is a topological embedding.
\end{theorem}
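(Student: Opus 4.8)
The plan is to check that $h_A$ is injective, continuous, and open onto its image, the last being the only nontrivial point. Continuity is immediate since each $h_e$ is a continuous homomorphism into $E\times_{I_e}H_e$. For injectivity, take distinct $x,y\in S$. If $\pi(x)\ne\pi(y)$, then since $A$ is $U$-dense in the $U$-semilattice $E$ (hence $E$ is $\mathbf 2$-separated, or more directly we can separate $\pi(x),\pi(y)$ by some ${\Uparrow}e$ with $e\in A$), the projections $\pi_e\circ h_e$ already separate $x$ and $y$. If $\pi(x)=\pi(y)=:f$, then $x,y$ lie in the same maximal subgroup $H_f$; using $U$-density of $A$ pick an idempotent $e\in A$ with $e\ll f$, so $f\in{\Uparrow}e$ and $h_e(x)=(f,xe)$, $h_e(y)=(f,ye)$. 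Since right multiplication by $e$ restricted to $H_f$ is an isomorphism onto $H_e$ (as $e\le f$ and we are in a Clifford semigroup), $xe\ne ye$, so $h_e$ separates $x$ and $y$.

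The heart of the argument is openness: given $x\in S$ and a neighborhood $O_x\subset S$, I must produce a finite set $e_1,\dots,e_n\in A$ and neighborhoods of $h_{e_i}(x)$ in $E\times_{I_{e_i}}H_{e_i}$ whose preimage under $h_A$ lies in $O_x$. Here is where the ditopological hypothesis enters. By definition of a ditopological Clifford semigroup, there are neighborhoods $U_x\subset S$ of $x$ and $W_{\pi(x)}\subset E$ of $\pi(x)$ with $(U_x\div W_{\pi(x)})\cap\pi^{-1}(W_{\pi(x)})\subset O_x$. Shrinking if necessary, I may assume $U_x\subset O_x$. Now use $U$-density of $A$: choose an idempotent $e\in A\cap W_{\pi(x)}$ with $e\ll\pi(x)$, i.e. $\pi(x)\in{\Uparrow}e$ and ${\Uparrow}e\subset W_{\pi(x)}$ can be arranged by first shrinking $W_{\pi(x)}$ to be contained in a basic neighborhood and applying $U$-density to that. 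Then $h_e(x)=(\pi(x),xe)\in E\times_{I_e}H_e$. The idea is that a basic neighborhood of $h_e(x)$ in the reduced product has the form $q(V\times N)$ where $V\subset{\Uparrow}e$ is a neighborhood of $\pi(x)$ in $E$ and $N\subset H_e$ is a neighborhood of $xe$; I will also combine this with the $h$-coordinate coming from $e'=\pi(x)$ if $\pi(x)\in A$, or rather just use that $\pi$ itself is recovered from the first coordinates.

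The key computation is then: if $z\in S$ satisfies $\pi(z)\in V$ (from the $\pi_e$-coordinate) and $ze\in N$ (from the $H_e$-coordinate), then $z\in O_x$. Choose $N$ so small that $N\subset U_x e$, which is possible because $x\in U_x$, so $xe\in U_x e$, and $U_x e$ is a neighborhood of $xe$ in $H_e$ — here I use that right translation $S\to Se$ restricted appropriately is open, or more carefully that the reduced-product topology makes $(E\setminus I_e)\times H_e$ open so the map $x\mapsto xe$ behaves well; this needs a small lemma that $x\mapsto xe$ is an open map from $\pi^{-1}({\Uparrow}e)$ onto $H_e$, which follows since it is a retraction-type homomorphism onto the group $H_e$. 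Granting $ze\in U_xe$, there is $u\in U_x$ with $ze=ue$, hence $z\in U_x\div\{e\}\subset U_x\div W_{\pi(x)}$; and $\pi(z)\in V\subset{\Uparrow}e\subset W_{\pi(x)}$ gives $z\in\pi^{-1}(W_{\pi(x)})$. Therefore $z\in(U_x\div W_{\pi(x)})\cap\pi^{-1}(W_{\pi(x)})\subset O_x$, as desired. So a single coordinate $e\in A$ already suffices to trap $O_x$, and $h_A$ is open onto its image.

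I expect the main obstacle to be the bookkeeping around the reduced-product topology: verifying that the sets $q(V\times N)$ (with $V$ open in ${\Uparrow}e$ and $N$ open in $H_e$) genuinely form a neighborhood base at $(\pi(x),xe)$, and that $x\mapsto xe$ is open onto $H_e$ on the relevant domain, so that the chosen $N\subset U_xe$ really is open in $H_e$. Once these topological preliminaries about $E\times_{I_e}H_e$ are in place, the ditopological condition does exactly the work it was designed for, and the rest is the routine division-identity chase sketched above.
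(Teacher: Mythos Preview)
Your overall strategy is the paper's, but two of the steps rest on claims that are false in general Clifford semigroups.

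\textbf{Injectivity when $\pi(x)=\pi(y)=f$.} You assert that right multiplication by $e$ gives an isomorphism $H_f\to H_e$ for $e\le f$. In a Clifford semigroup the bonding map $H_f\to H_e$, $x\mapsto xe$, is only a homomorphism and can have nontrivial kernel (take $E=\{0,1\}$ discrete, $H_1=\mathbb Z/2\mathbb Z$, $H_0$ trivial). So an arbitrary $e\in A$ with $e\ll f$ need not separate $x$ and $y$. The paper's fix is to choose the idempotent inside the \emph{open} set $V=\{a\in E:xa\ne ya\}$, which contains $f$; $U$-density then yields $a\in A\cap V$ with $a\ll f$, and $h_a(x)=(f,xa)\ne(f,ya)=h_a(y)$ by the very choice of $a$.

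\textbf{Openness onto the image.} The obstacle you flag at the end is real and is not removable as stated: the retraction $x\mapsto xe$ from $\pi^{-1}({\Uparrow}e)$ onto $H_e$ is \emph{not} open in general, so $U_xe$ need not be a neighborhood of $xe$ in $H_e$. (For a retraction $r:X\to A$ one only gets $r(U)\supseteq U\cap A$, which is useless here since typically $x\notin H_e$. A concrete ditopological counterexample: $E=\{0,1\}$ discrete, $H_1=\mathbb Z$ discrete, $H_0=\mathbb T$, bonding map $n\mapsto e^{i\alpha n}$ with $\alpha/\pi$ irrational.) The paper avoids this entirely. First use continuity of multiplication to shrink $W_e$ to $W'_e$ with $xW'_e\subset U_x$; then pick $a\in A\cap W'_e$ with $a\ll e$. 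Now $xa\in U_x$, so $xa\in U:=U_x\cap H_a$, and $U$ is open in $H_a$ simply because $U_x$ is open in $S$. The basic neighborhood $(W'_e\cap{\Uparrow}a)\times U$ of $h_a(x)$ then does the job: if $h_a(z)$ lies in it, then $za\in U\subset U_x$ with $a\in W'_e\subset W_e$ gives $z\in U_x\div W_e$, and $\pi(z)\in W'_e\subset W_e$ finishes the inclusion $z\in O_x$. No openness of any transition map is invoked. (Incidentally, your side claim that one can arrange ${\Uparrow}e\subset W_{\pi(x)}$ is also false---any semilattice with a top element shows this---but as your own computation reveals, only $V\subset W_{\pi(x)}\cap{\Uparrow}e$ is needed.)
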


\begin{proof} We lose no generality assuming that $S$ is not empty.
In this case the semilattice $E$ and the $U$-dense subset $A$ of $E$
both are non-empty. The map $h_A$ is a continuous homomorphism,
being a diagonal product of continuous homomorphisms $h_{a}, a\in
A$.

First, we show that $h_A$ is an injective homomorphism. Take two
distinct points $x,y\in S$ and consider their projections $\pi(x),
\pi(y)$ on the maximal semilattice $E$.

If $\pi(x)\neq \pi(y)$, then it follows immediately that
$h_{a}(x)\neq h_{a}(y)$ for all $a\in A$, which implies
$h_A(x)\neq h_A(y)$.

If $\pi(x)= \pi(y)$, then the open set $V=\{e\in E: xe\neq ye\}\subset E$ is a neighborhood of the idempotent $e=\pi(x)=\pi(y)$.
The $U$-density of the set $A$ in $E$ yields an
element $a\in A\cap V$ such that $e\in {\Uparrow} a$. For this
element $a$ we get $h_{a}(x)=(e,xa)\neq(e,ya)=h_{a}(y)$, which
means that $h_{a}$ separates the points $x,y\in S$. Hence $h_A$ is
injective.

Now we prove that the inverse function $h_A^{-1}:h_A(S)\rightarrow
S$ is continuous. Given any element $x\in S$ and an open
neighborhood $O_{x}\subset S$ of $x$, we need to find a
neighborhood $O_{h_A(x)}\subset h_A(S)$ of the point $h_A(x)$ such that $h_A^{-1}(O_{h_A(x)})\subset O_{x}$.

Since the topological Clifford semigroup $S$ is ditopological, there are neighborhoods $U_{x}$ of $x$
and $W_{e}\subset E$ of the idempotent $e=\pi(x)$ such that
$(U_x\div  W_e)\cap \pi^{-1}(W_{e})\subset
O_x$. It follows from $xe=x\in U_{x}$ and the continuity of the
multiplication that there is a neighborhood $W'_{e}\subset W_{e}$
of the idempotent $e$ such that $x W'_{e}\subset
U_{x}$.

The $U$-density of $A$ in $E$ yields a point $a\in A\cap W_e'$ such that $e\in {\Uparrow}a$. Consider the neighborhood $W=W'_{e}\cap {\Uparrow}a$
of $e$ and the open subset $U=U_{x}\cap H_{a}$ in the maximal group $H_{a}$. Observe that $W\times U$ is an open neighborhood of the
point $h_{a}(x)$ in $E\times_{I_{a}}H_{a}$ and hence
$O_{h_A(x)}=\pr^{-1}_{a}(W\times U)$ is an open neighborhood of
$h_A(x)$ in the Tychonoff product $\prod_{a\in
A}E\times_{I_{a}}H_{a}$. Here $\pr_{a}:\prod_{b\in
A}E\times_{I_{b}}H_{b}\rightarrow E\times_{I_{a}}H_{a}$ denotes the $a$-th coordinate projection.

To finish the proof of the continuity of $h_A^{-1}$ at $h_A(x)$ it suffices to check that $h_A^{-1}(O_{h_A(x)})\subset O_{x}$.
Take any point $y\in S$ with $h_A(y)\in O_{h_A(x)}$.
The definition of $O_{h_A(x)}$ guarantees $h_{a}(y)=\pr_a(h_A(y))\in W\times U$
and hence $\pi(y)\in W\subset W'_{e}\subset W_{e}$ and $ya\in
U\subset U_{x}$, which means that $y\in (U_x\div
W_e)\cap \pi^{-1}(W_{e})\subset  O_x$.
\end{proof}

\subsection{The Second Embedding Theorem} Now we shall modify the embedding $h_A$ from Theorem~\ref{embedding} to produce an embedding
of the topological Clifford $U$-semigroup $S$ into the Tychonoff product of the maximal semilattice $E$ and the cones $\hat H_e$, $e\in E$, over  maximal subgroups of $S$.

Denote by $\Hom(E,\II)$ the set of all continuous homomorphisms from the maximal semilattice $E$ of $S$ to the min-interval $\II$.
The set $\Hom(E,\II)$ contains a subset $\Hom(E,\mathbf 2)$ consisting of all homomorphisms from $E$ to the two-element semilattice $\mathbf 2=\{0,1\}$. For two idempotents $e,a\in E$ put
$$
\begin{aligned}
\Hom_e^a(E,\II)&=\big\{h\in \Hom(E,\II):h(a)=1\mbox{ \ and \ }h(I_e)\subset\{0\}\big\}\mbox{ \ and}\\
\Hom_e^a(E,\mathbf 2)&=\Hom_e^a(E,\II)\cap\Hom(E,\mathbf 2).
\end{aligned}
$$
By Lemma 2.10 in \cite{CHK2}, for any idempotents $e\ll a$ of $E$ the set  $\Hom_e^a(E,\II)$ is not empty. If $E$ is a $U_2$-semilattice, then moreover, the set $\Hom_e^a(E,\mathbf 2)$ is not empty.

For any idempotents $e\ll a$, fix a continuous homomorphism $h_e^a\in\Hom_e^a(E,\II)$ such that
$h_e^a\in\Hom_e^a(E,\mathbf 2)$ if $\Hom_e^a(E,\mathbf 2)$ is not empty. The latter condition holds if $E$ is a $U_2$-semilattice. The homomorphism $h_e^a:E\to\II$ induces a continuous homomorphism $\hat h_e^a:S\to\widehat H_e$ defined by the formula
$$\hat h_e^a(x)=\begin{cases}
(h_e^a\circ \pi(x),xe),&\mbox{if $h_e^a\circ\pi(x)>0$},\\
0,&\mbox{otherwise}.
\end{cases}
$$
If $h_e^a\in\Hom(E,\mathbf 2)$, then $\hat h_e^a(S)\subset \dot
H_e\subset \widehat H_e$.

For any subset $A\subset E$, the homomorphisms $\hat h_e^a$, $e\in A$, $a\in A\cap{\Uparrow}e$, compose the continuous homomorphism
$$\hat h_A^A:S\to \prod_{e\in A}\widehat H_e^{A\cap{\Uparrow}e},\;\;\;\hat h_A^A:x\mapsto \big((\hat h_e^a(x))_{a\in A\cap{\Uparrow}e}\big)_{e\in A}.$$
Taking the diagonal product of $\hat h_A^A$ with the natural projection $\pi:S\to E$, we obtain a continuous homomorphism
$$\pi\hat h_A^A:S\to E\times \prod_{e\in A}\widehat H_e^{A\cap{\Uparrow}e},\;\;\;\pi\hat h_A^A:x\mapsto \big(\pi(x),\hat h_A^A(x)\big).$$

\begin{theorem}\label{T2} If $S$ is a ditopological Clifford $U$-semigroup, then for any $U$-dense subset $A$ in $E$ the homomorphism
$$\pi\hat h_A^A:S\to E\times\prod_{e\in A}\widehat H_e^{A\cap{\Uparrow}e}$$is a topological embedding. If $E$ is a $U_2$-semilattice,
then $$\pi\hat h_A^A(S)\subset E\times \prod_{e\in A}\dot H_e^{A\cap{\Uparrow}e}.$$
\end{theorem}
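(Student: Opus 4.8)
The plan is to build on Theorem~\ref{embedding}, which already tells us that $h_A\colon S\to\prod_{e\in A}E\times_{I_e}H_e$ is a topological embedding for every $U$-dense subset $A\subset E$. Since $\pi=\pi_e\circ h_e$ for each $e$, the homomorphism $\pi\hat h_A^A$ records at least as much information as $\pi$ together with all the $\hat h_e^a$, so it suffices to show that knowing $\pi(x)$ and all the values $\hat h_e^a(x)$ (for $e\in A$, $a\in A\cap{\Uparrow}e$) is enough to recover $h_e(x)=(\pi(x),xe)$ whenever $\pi(x)\in{\Uparrow}e$, continuously. Concretely, I would first prove injectivity directly: given $x\ne y$ in $S$, if $\pi(x)\ne\pi(y)$ we are done by the $\pi$-coordinate; if $\pi(x)=\pi(y)=:f$, then as in Theorem~\ref{embedding} the open set $V=\{e\in E:xe\ne ye\}$ is a neighborhood of $f$, so $U$-density of $A$ gives $e\in A\cap V$ with $e\ll f$, hence (by $U$-density again, or directly) some $a\in A\cap{\Uparrow}e$ with $f\in{\Uparrow}a$ — in fact one can take $a=f$ if $f\in A$, but in general pick $a\in A\cap{\Uparrow}e$ close enough to $f$; since $h_e^a\circ\pi(x)=h_e^a(f)>0$ (because $f\in{\Uparrow}a$ and $h_e^a(a)=1$ forces $h_e^a>0$ on ${\uparrow}a\supset{\Uparrow}a\ni f$), we get $\hat h_e^a(x)=(h_e^a(f),xe)\ne(h_e^a(f),ye)=\hat h_e^a(y)$.

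Next, continuity of the inverse. Fix $x\in S$ and a neighborhood $O_x$. As in the proof of Theorem~\ref{embedding}, ditopologicality gives neighborhoods $U_x$ of $x$ and $W_e\subset E$ of $e=\pi(x)$ with $(U_x\div W_e)\cap\pi^{-1}(W_e)\subset O_x$, and continuity of multiplication shrinks $W_e$ to $W'_e$ with $xW'_e\subset U_x$. Now $U$-density of $A$ yields $a\in A\cap W'_e$ with $e\in{\Uparrow}a={\Downarrow}^{-1}\!$, i.e.\ $a\ll e$; so $a$ plays the role of the idempotent in $A$, and we also need an element of $A\cap{\Uparrow}a$ as the superscript — here I would use $a$ itself in the role of $e$ and pick the superscript $b\in A\cap{\Uparrow}a$ with $e\in{\Uparrow}b$ (again by $U$-density, applied to the neighborhood ${\Uparrow}a$ of $e$, intersected with $W'_e$). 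The point is that on a suitable neighborhood of $h_e^b\circ\pi(x)=h_e^b(e)>0$, the coordinate $\hat h_a^b$ equals $(h_a^b\circ\pi,\,(\cdot)a)$, an honest continuous map onto an open subset of $E\times H_a$. So the preimage under $\pi\hat h_A^A$ of $\{(z,(\cdots)):z\in W,\ (h_a^b\circ\pi)\text{-coordinate}>0,\ H_a\text{-coordinate}\in U_x\cap H_a\}$, where $W=W'_e\cap\{z:h_a^b(z)>0\}$, is contained in $(U_x\div W_e)\cap\pi^{-1}(W_e)\subset O_x$: indeed any such $y$ has $\pi(y)\in W\subset W_e$ and $ya\in U_x$, hence $y\in U_x\div W_e$ by definition of right division ($ya=$ the given element of $U_x$, $a\in W_e$).

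For the final clause, if $E$ is a $U_2$-semilattice then by Proposition~\ref{semlat}(5) it is $\mathbf 2$-separated, and then by our choice each $h_e^a$ was selected inside $\Hom_e^a(E,\mathbf 2)$ whenever that set is nonempty — and the excerpt notes that $U_2$ guarantees $\Hom_e^a(E,\mathbf 2)\ne\emptyset$ for $e\ll a$. Hence every $\hat h_e^a$ maps $S$ into $\dot H_e\subset\widehat H_e$, giving $\pi\hat h_A^A(S)\subset E\times\prod_{e\in A}\dot H_e^{A\cap{\Uparrow}e}$. The main obstacle I anticipate is purely bookkeeping: the superscript set $A\cap{\Uparrow}e$ is indexed by a \emph{second} idempotent, so one must be careful that the $U$-density applications produce \emph{both} the lower idempotent $a\ll e$ (needed so that the $(\cdot)a$ multiplication sees a maximal subgroup sitting over a neighborhood of $e$) \emph{and} a superscript $b$ with $a\ll b$ and $b\le e$ (so that $h_a^b(e)>0$ and the relevant coordinate is ``switched on'' near $x$). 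Threading these two choices through the $\div$-condition without circularity is the delicate point; everything else follows the template of Theorem~\ref{embedding} essentially verbatim.
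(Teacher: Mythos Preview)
Your proposal is correct and follows essentially the same route as the paper: a direct verification of injectivity (splitting on whether $\pi(x)=\pi(y)$, and in the equal case using two nested applications of $U$-density to produce a subscript idempotent and a superscript idempotent in $A$), followed by the ditopological argument for continuity of the inverse, again using two nested $U$-density choices; the final $U_2$-clause is handled exactly as in the paper. The only differences are cosmetic: your letters $(e,a)$ play the roles of the paper's $(a,b)$, and for the neighborhood $W$ you take $W'_e\cap\{z:h_a^b(z)>0\}$ where the paper takes the slightly smaller $W'_e\cap{\Uparrow}b$, but either choice works. Your opening remark about ``recovering $h_e(x)$ continuously from $\pi\hat h_A^A$'' to reduce to Theorem~\ref{embedding} is an interesting alternative strategy, but you (rightly) do not actually pursue it --- the direct argument you then give is self-contained and matches the paper.
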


\begin{proof}
The definition of the map $\pi\hat h_A^A$ guarantees that it is a continuous homomorphism. To prove that it is injective
take any two distinct points $x,y\in S$ and consider their projections
$\pi(x),\pi(y)$ on the maximal semilattice $E$.

If $\pi(x)\neq \pi(y)$, then $\pi\hat{h}_A^{A}(x)=(\pi(x),\hat{h}_A^{A}(x))\neq (\pi(y),\hat{h}_A^{A}(y))=\pi\hat{h}_A^{A}(y)$.

If $\pi(x)=\pi(y)$, then the open set $V=\{e\in E: xe\neq ye\}\subset E$ is a neighborhood of the idempotent $e=\pi(x)=\pi(y)$. The $U$-density of the set $A$ in $E$ yields an
element $a\in A\cap V$ with $e\in {\Uparrow}a$, and an element $b\in A\cap V\cap{\Uparrow}a$ with $e\in{\Uparrow}b$.
It follows that $1=h^{b}_{a}(b)=h_a^b(e)$, which clearly forces
$\hat{h}_{a}^{b}(x)=({h}^{b}_{a}(e),ax)=(1,ax)\neq (1,ay)=({h}^{b}_{a}(e),ay)=\hat{h}_{a}^{b}(y)$ and so $\pi\hat{h}_A^{A}(x)\neq \pi\hat{h}_A^{A}(y)$.

It remains to show that the inverse function
$(\pi\hat{h}_A^{A})^{-1}:\pi\hat{h}_A^{A}(S)\rightarrow S$ is
continuous. Given an element $x\in S$ and an open neighborhood
$O_x\subset S$ of $x$, we need to find a neighborhood
$O_{y}\subset E\times \prod_{e\in A}\widehat
H_e^{A\cap{\Uparrow}e}$ of the point $y=\pi\hat h_A^A(x)$ such
that $(\pi\hat{h}_A^{A})^{-1}(O_y)\subset O_x$.

Since the topological Clifford semigroup $S$ is dicontinuous, there are a neighborhood $U_x\subset S$ of $x$ and a neighborhood
$W_{e}\subset E$ of the idempotent $e=\pi(x)$ such that $(U_x\div  W_e)\cap \pi^{-1}(W_{e})\subset O_x$.
It follows from $xe=x\in U_x$ and the continuity of multiplication that there is a neighborhood $W'_{e}\subset W_{e}$ such that $x W_{e}'\subset U_x$.

The $U$-density of $A$ in $E$ yields a point $a\in A\cap W_e'$ such that $e\in {\Uparrow}a$ and a point $b\in A\cap W_e'\cap{\Uparrow}a$ such that $e\in{\Uparrow}b$.
It follows that $W=W_e'\cap{\Uparrow}b$ is an open neighborhood of the point $e=\pi(x)$.

Observe that $h_a^b(e)=h_a^b(b)=1$ and hence $\hat
h_a^b(x)=(1,xa)$. Then $U'=(0,1]\times(U_x\cap H_a)$ is an open
neighborhood of $\hat h_a^b(x)$ in $\widehat H_a$ and its preimage
$U=\pr_a^{-1}(\pr_b^{-1}(U'))$ is an open neighborhood of $\hat
h_A^A(x)$ in the Tychonoff product $\prod_{c\in
A}\widehat{H}^{A\cap {\Uparrow}c}_c$. Here $pr_{a}:\prod_{c\in
A}\widehat{H}^{A\cap {\Uparrow}c}_c\rightarrow \widehat{H}^{A\cap
{\Uparrow}{a}}_{a}$ and $pr_{b}:\widehat{H}^{A\cap
{\Uparrow}{a}}_{a}\rightarrow \widehat{H}_{a}$ denote the
coordinate projections.

We claim that the neighborhood $O_{y}=W\times U$ of the point $y=\pi\hat{h}_A^{A}(x)$ witnesses that $(\pi\hat{h}_A^{A})^{-1}$ is continuous at $y$.
Given any point $z\in S$ with $\pi\hat h_A^A(z)\in O_y=W\times U$, we need to check that $z\in O_x$.
It follows that $\pi(z)\in W=W'_e\cap{\Uparrow}b\subset W_e$ and $(h_a^b(\pi(z)),za)=\hat h_a^b(z)\in U'=(0,1]\times (U_x\cap H_a)$.
So, $z\in (U_x\div  W_e)\cap\pi^{-1}(W_e)\subset O_x$,  which completes the proof.
\end{proof}

\section{Some corollaries of Embedding Theorem}

In this section we shall derive some corollaries of the Embedding
Theorem \ref{T2}.

\begin{corollary}\label{semgroup} Let $S$ be a topological Clifford $U$-semigroup and $A$ be a $U$-dense subset in $E$.
Then the following conditions are
equivalent:
\begin{enumerate}
\item{$S$ embeds into the Tychonoff product of topological
semilattices and cones over topological groups};
\item{$S$ embeds into the topological Clifford semigroup $E\times
\prod_{e\in A}\widehat{H}^{{\Uparrow}e\cap A}_{e}$};
\item{$S$ is ditopological}.
\end{enumerate}
\end{corollary}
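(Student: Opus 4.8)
The plan is to prove the three conditions equivalent by establishing the cycle $(3)\Rightarrow(2)\Rightarrow(1)\Rightarrow(3)$. The implication $(3)\Rightarrow(2)$ is essentially immediate from Theorem~\ref{T2}: if $S$ is a ditopological Clifford $U$-semigroup, then for the given $U$-dense subset $A\subset E$ the homomorphism $\pi\hat h_A^A\colon S\to E\times\prod_{e\in A}\widehat H_e^{A\cap{\Uparrow}e}$ is a topological embedding, which is exactly statement (2). The implication $(2)\Rightarrow(1)$ is also trivial, since $E$ is a topological semilattice and each $\widehat H_e$ is by definition the cone $\II\times_{\{0\}}H_e$ over the topological group $H_e$, so the target of the embedding in (2) is already a Tychonoff product of topological semilattices and cones over topological groups.

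The remaining implication $(1)\Rightarrow(3)$ is where the real (though still short) work lies, and it rests on the closure properties recorded in Proposition~\ref{p2.5} and Corollary~\ref{c2.6}. First I would observe that every cone $\widehat G=\II\times_{\{0\}}G$ over a topological group $G$ is a ditopological Clifford semigroup by Corollary~\ref{c2.6}, and every topological semilattice is a ditopological Clifford semigroup by Proposition~\ref{p2.5}. Since, again by Proposition~\ref{p2.5}, the class of ditopological Clifford semigroups is closed under Tychonoff products, any Tychonoff product of topological semilattices and cones over topological groups is a ditopological Clifford semigroup. Finally, the same proposition tells us that the class is closed under passing to Clifford subsemigroups; hence if $S$ embeds (as a topological Clifford semigroup, i.e.\ as a Clifford subsemigroup of its image) into such a product, then $S$ itself is a ditopological Clifford semigroup, giving (3).

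One small point to handle carefully: in (1) the word ``embeds'' must be interpreted in the category of topological semigroups, so the image $h(S)$ is a subsemigroup of the product which is algebraically isomorphic to $S$, and since $S$ is Clifford this image is a Clifford subsemigroup; being topologically isomorphic to $S$, it is then enough to know that the ambient product is ditopological and invoke the subsemigroup-closure clause. I do not anticipate a genuine obstacle here — the corollary is a straightforward repackaging of Theorem~\ref{T2} together with Proposition~\ref{p2.5} and Corollary~\ref{c2.6} — the only thing to watch is that the $U$-density of $A$ in $E$ (part of the hypotheses) is precisely what Theorem~\ref{T2} needs for $(3)\Rightarrow(2)$, so the statement of the corollary is correctly set up.
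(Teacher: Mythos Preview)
Your proposal is correct and follows exactly the same approach as the paper: the cycle $(3)\Rightarrow(2)\Rightarrow(1)\Rightarrow(3)$, with $(3)\Rightarrow(2)$ by Theorem~\ref{T2}, $(2)\Rightarrow(1)$ trivial, and $(1)\Rightarrow(3)$ via Proposition~\ref{p2.5} and Corollary~\ref{c2.6}. Your write-up simply spells out in more detail what the paper compresses into a single sentence.
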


\begin{proof} The implication $(3)\Ra(2)$ follows from Theorem~\ref{T2}, $(2)\Ra(1)$ is trivial,
and $(1)\Ra(3)$ follows from Proposition~\ref{p2.5} and Corollary~\ref{c2.6}.
\end{proof}

In the same way Theorem~\ref{T2} implies:

\begin{corollary}\label{semgroup1} Let $S$ be a topological Clifford semigroup whose idempotent band $E$ is a $U_2$-semilattice,
and $A$ be a $U$-dense subset in $E$. Then the following conditions are
equivalent:
\begin{enumerate}
\item{$S$ embeds into the Tychonoff product of topological
semilattices and $0$-extensions of topological groups};
\item{$S$ embeds into the topological Clifford semigroup $E\times
\prod_{e\in A}\dot{H}^{{\Uparrow}e\cap A}_{e}$};
\item{$S$ is ditopological}.
\end{enumerate}
\end{corollary}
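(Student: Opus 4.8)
The final statement to prove is Corollary~\ref{semgroup1}. Let me sketch a proof plan.

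The plan is to follow exactly the pattern of the proof of Corollary~\ref{semgroup}, since the three conditions are structurally analogous, with cones $\widehat H_e$ replaced by $0$-extensions $\dot H_e$, and the hypothesis ``$U$-semilattice'' strengthened to ``$U_2$-semilattice''. First I would observe that a $U_2$-semilattice is in particular a $U$-semilattice (this implication appears in the diagram in Section~\ref{topsem} and follows from Proposition~\ref{semlat}), so $S$ is a topological Clifford $U$-semigroup and all the machinery of Section~\ref{2emb} applies. The three implications to establish are $(3)\Ra(2)$, $(2)\Ra(1)$, and $(1)\Ra(3)$.

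For $(3)\Ra(2)$: assuming $S$ is ditopological, apply Theorem~\ref{T2}. The first part of that theorem gives that $\pi\hat h_A^A:S\to E\times\prod_{e\in A}\widehat H_e^{A\cap{\Uparrow}e}$ is a topological embedding; the second part, invoked precisely because $E$ is a $U_2$-semilattice, gives $\pi\hat h_A^A(S)\subset E\times\prod_{e\in A}\dot H_e^{A\cap{\Uparrow}e}$. Hence $S$ embeds into $E\times\prod_{e\in A}\dot H_e^{{\Uparrow}e\cap A}$, which is condition $(2)$. For $(2)\Ra(1)$: this is immediate, since $E$ is a topological semilattice and each $\dot H_e$ is a $0$-extension of the topological group $H_e$, so the semigroup in $(2)$ is itself a Tychonoff product of topological semilattices and $0$-extensions of topological groups.

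For $(1)\Ra(3)$: I would argue that the class of ditopological Clifford semigroups contains all topological semilattices and all $0$-extensions $\dot G=\mathbf 2\times_{\{0\}}G$ of topological groups (the former by Proposition~\ref{p2.5}, the latter by Corollary~\ref{c2.6}), and is closed under Tychonoff products (Proposition~\ref{p2.5}) and under taking Clifford subsemigroups (Proposition~\ref{p2.5}). Therefore any topological Clifford semigroup that embeds into a Tychonoff product of topological semilattices and $0$-extensions of topological groups is itself ditopological. I do not anticipate a genuine obstacle here: the whole corollary is a packaging of Theorem~\ref{T2} together with the closure properties recorded in Proposition~\ref{p2.5} and Corollary~\ref{c2.6}. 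The only point requiring a moment's care is to note explicitly that the $U_2$-semilattice hypothesis is what licenses the use of the second, sharper conclusion of Theorem~\ref{T2} (the containment in the $0$-extensions rather than the cones), and that it also guarantees $S$ falls under the scope of that theorem via the implication $U_2\Ra U$.

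\begin{proof} Since $E$ is a $U_2$-semilattice, it is a $U$-semilattice (see the diagram preceding Proposition~\ref{semlat}), so $S$ is a topological Clifford $U$-semigroup and Theorem~\ref{T2} applies to $S$.

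$(3)\Ra(2)$: If $S$ is ditopological, then by Theorem~\ref{T2} the homomorphism $\pi\hat h_A^A:S\to E\times\prod_{e\in A}\widehat H_e^{A\cap{\Uparrow}e}$ is a topological embedding, and since $E$ is a $U_2$-semilattice its image is contained in $E\times\prod_{e\in A}\dot H_e^{A\cap{\Uparrow}e}$. Hence $S$ embeds into $E\times\prod_{e\in A}\dot H_e^{{\Uparrow}e\cap A}$.

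$(2)\Ra(1)$: The semigroup $E\times\prod_{e\in A}\dot H_e^{{\Uparrow}e\cap A}$ is a Tychonoff product of the topological semilattice $E$ and copies of the $0$-extensions $\dot H_e$ of the topological groups $H_e$; so $(1)$ follows trivially.

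$(1)\Ra(3)$: By Proposition~\ref{p2.5} every topological semilattice is a ditopological Clifford semigroup, and by Corollary~\ref{c2.6} so is every $0$-extension $\dot G=\mathbf 2\times_{\{0\}}G$ of a topological group $G$. By Proposition~\ref{p2.5} the class of ditopological Clifford semigroups is closed under Tychonoff products and under taking Clifford subsemigroups. Consequently, any topological Clifford semigroup embeddable into a Tychonoff product of topological semilattices and $0$-extensions of topological groups is itself a ditopological Clifford semigroup; in particular $S$ is ditopological.
\end{proof}
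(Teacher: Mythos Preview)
Your proof is correct and follows exactly the pattern the paper uses: the paper simply says ``In the same way Theorem~\ref{T2} implies'' Corollary~\ref{semgroup1}, pointing back to the proof of Corollary~\ref{semgroup}, which cites Theorem~\ref{T2} for $(3)\Ra(2)$, calls $(2)\Ra(1)$ trivial, and invokes Proposition~\ref{p2.5} and Corollary~\ref{c2.6} for $(1)\Ra(3)$. Your write-up is a faithful (and slightly more detailed) unpacking of that same argument, including the observation that the $U_2$-hypothesis both puts $S$ in the scope of Theorem~\ref{T2} and triggers its sharper conclusion.
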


Let us recall that given two topological semigroups $X,Y$, we say that $X$ is {\em $Y$-embeddable}
if the canonical homomorphism $X\to Y^{\Hom(X,Y)}$ is a topological embedding. It is easy to see that $X$ is $Y$-embeddable,
if and only if $X$ embeds into some power $Y^\kappa$ of $Y$. In this case we shall write $X\hookrightarrow Y^\kappa$.

\begin{corollary}\label{cones} Let $S$ be a topological Clifford $U$-semigroup, $E$ be its maximal semilattice, and
  $H=\prod_{e\in E}H_e$ be the Tychonoff product of its maximal subgroups.
The following conditions are equivalent:
\begin{enumerate}
\item $S$ embeds into a Tychonoff product of the cones over
topological groups; \item $S$ is $\widehat H$-embeddable; \item
$S$ is ditopological and $E$ is $\II$-embeddable.
\end{enumerate}
\end{corollary}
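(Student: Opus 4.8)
The plan is to close the cycle of implications $(2)\Ra(1)\Ra(3)\Ra(2)$, using Theorem~\ref{T2}, Proposition~\ref{p2.5} and Corollary~\ref{c2.6}. The implication $(2)\Ra(1)$ will be immediate: by definition $\widehat H=\widehat{\prod_{e\in E}H_e}$ is the cone over the topological group $H$, so any power $\widehat H^\kappa$ is itself a Tychonoff product of cones over topological groups, and hence so is any semigroup embeddable into such a power.

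For $(1)\Ra(3)$ I would reason as follows. Suppose $S$ embeds into a product $P=\prod_{\alpha}\widehat{G_\alpha}$ of cones over topological groups $G_\alpha$. By Corollary~\ref{c2.6} each $\widehat{G_\alpha}$ is a ditopological Clifford semigroup, so by Proposition~\ref{p2.5} the product $P$ and its Clifford subsemigroup $S$ are ditopological. To see that $E$ is $\II$-embeddable, I would identify the maximal semilattice of $\widehat{G_\alpha}=\II\times_{\{0\}}G_\alpha$ with $\II$ (its idempotents are exactly $0$ together with the pairs $(t,1_{G_\alpha})$ for $t>0$, and this set carries the subspace topology of $\II$); hence the maximal semilattice of $P$ is topologically isomorphic to $\II^{A}$. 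Since a Clifford embedding carries idempotents to idempotents, it restricts to a topological embedding of $E$ into $\II^{A}$, which is exactly $\II$-embeddability of $E$.

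The substantial implication is $(3)\Ra(2)$. Assuming $S$ ditopological and $E$ $\II$-embeddable, I would first apply Theorem~\ref{T2} with the $U$-dense set $A=E$ (a $U$-semilattice is $U$-dense in itself) to obtain a topological embedding $S\hookrightarrow E\times\prod_{e\in E}\widehat H_e^{{\Uparrow}e}$, and then embed the target into a power of $\widehat H$. The one nonobvious ingredient is the functoriality of the cone construction: a topological embedding of topological groups $j\colon G_1\to G_2$ induces a topological embedding $\widehat j\colon\widehat{G_1}\to\widehat{G_2}$, given by $0\mapsto 0$ and $(t,g)\mapsto(t,j(g))$ for $t>0$, the only delicate check being the embedding property at the cone point, carried out directly from the definition of the reduced-product topology. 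Granting this, each maximal subgroup $H_e=\pi^{-1}(e)$ is a topological group (its inversion is the restriction of the continuous inversion of $S$) and embeds into $H=\prod_{f\in E}H_f$ as a coordinate subgroup, so $\widehat H_e\hookrightarrow\widehat H$, whence $\widehat H_e^{{\Uparrow}e}\hookrightarrow\widehat H^{{\Uparrow}e}$ and $\prod_{e\in E}\widehat H_e^{{\Uparrow}e}\hookrightarrow\prod_{e\in E}\widehat H^{{\Uparrow}e}$, a power of $\widehat H$. Applying the same observation to the inclusion of the trivial group $\{1_H\}$ into $H$ gives $\II\cong\widehat{\{1_H\}}\hookrightarrow\widehat H$, so $\II$-embeddability of $E$ upgrades to $E\hookrightarrow\II^{\kappa}\hookrightarrow\widehat H^{\kappa}$. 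Composing these embeddings, $S$ embeds into a power of $\widehat H$, i.e.\ $S$ is $\widehat H$-embeddable.

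I expect the only genuine obstacle to be the functoriality of the cone construction with respect to topological-group embeddings used in $(3)\Ra(2)$; everything else reduces to bookkeeping with Tychonoff products together with appeals to Theorem~\ref{T2}, Proposition~\ref{p2.5} and Corollary~\ref{c2.6}.
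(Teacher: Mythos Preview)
Your proposal is correct and follows essentially the same route as the paper: the cycle $(2)\Ra(1)\Ra(3)\Ra(2)$, with $(1)\Ra(3)$ via Proposition~\ref{p2.5} and Corollary~\ref{c2.6}, and $(3)\Ra(2)$ via Theorem~\ref{T2} applied with $A=E$ followed by embedding each factor into a power of $\widehat H$. You supply more detail than the paper does---in particular the identification of the maximal semilattice of a cone with $\II$ for the $\II$-embeddability of $E$, and the functoriality of $G\mapsto\widehat G$ with respect to topological-group embeddings---both of which the paper leaves implicit.
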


\begin{proof} The implication $(2)\Ra(1)$ is trivial and $(1)\Ra(3)$ follows from Proposition~\ref{p2.5} and Corollary~\ref{c2.6}.

$(3)\Ra(1)$ Assume that the topological Clifford $U$-semigroup $S$
is ditopological and its maximal semilattice $E$ is
$\II$-embeddable. Consequently,
$E\hookrightarrow\II^{\Hom(E,\II)}\hookrightarrow \widehat
H^{\Hom(E,\II)}$.

 By Theorem~\ref{T2}, $S$ embeds into the Tychonoff product $E\times \prod_{e\in E}\widehat H_e^{E}$,
 and the latter product embeds into the Tychonoff product $\widehat H^{\Hom(E,\II)}\times \widehat H^{E\times E}$, which implies that $S$ is
  $\widehat H$-embeddable.
\end{proof}

In the same way Theorem~\ref{T2} implies:

\begin{corollary} Let $S$ be a topological Clifford $U$-semigroup, $E$ be its maximal semilattice, and
  $H=\prod_{e\in E}H_e$ be the Tychonoff product of its maximal subgroups.
The following conditions are equivalent:
\begin{enumerate}
\item $S$ embeds into a Tychonoff product of the 0-extensions of topological groups;
\item $S$ is $\dot H$-embeddable;
\item $S$ is ditopological and $E$ is $\mathbf 2$-embeddable.
\end{enumerate}
\end{corollary}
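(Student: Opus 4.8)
The plan is to mimic the proof of Corollary~\ref{cones}, replacing $\II$ by $\mathbf 2$, the cone $\widehat H$ by the $0$-extension $\dot H$, and $\II$-embeddability of $E$ by $\mathbf 2$-embeddability, while noting at the key step that $\mathbf 2$-embeddability of $E$ forces $E$ to be a $U_2$-semilattice (via Proposition~\ref{semlat}(6)), which is exactly what licences the use of the second, stronger, conclusion of Theorem~\ref{T2}.

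\smallskip

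First I would dispose of the easy implications. The implication $(2)\Ra(1)$ is trivial, since a power of $\dot H$ is in particular a Tychonoff product of $0$-extensions of topological groups. For $(1)\Ra(3)$, suppose $S$ embeds into a Tychonoff product $\prod_{\alpha}\dot G_\alpha$ of $0$-extensions of topological groups $G_\alpha$. Each $\dot G_\alpha=\mathbf 2\times_{\{0\}}G_\alpha$ is a ditopological Clifford semigroup by Corollary~\ref{c2.6}, so by Proposition~\ref{p2.5} (closedness under Tychonoff products and Clifford subsemigroups) $S$ is ditopological. Moreover the maximal semilattice of $\dot G_\alpha$ is $\mathbf 2$, so the maximal semilattice of $\prod_\alpha\dot G_\alpha$ is $\mathbf 2^{A}$; since the semilattice embedding $S\hookrightarrow\prod_\alpha\dot G_\alpha$ carries $E$ into $\mathbf 2^{A}$, the semilattice $E$ is $\mathbf 2$-embeddable.

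\smallskip

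The substantive direction is $(3)\Ra(2)$. Assume $S$ is a ditopological Clifford $U$-semigroup whose maximal semilattice $E$ is $\mathbf 2$-embeddable. Since $E$ is both a $U$-semilattice and $\mathbf 2$-embeddable, Proposition~\ref{semlat}(6) gives that $E$ is a $U_2$-semilattice. Now fix a $U$-dense subset $A\subseteq E$ (for instance $A=E$ itself). Because $E$ is a $U_2$-semilattice, the second assertion of Theorem~\ref{T2} applies: the homomorphism $\pi\hat h_A^A$ is a topological embedding of $S$ into $E\times\prod_{e\in A}\dot H_e^{A\cap{\Uparrow}e}$. It remains to embed the target into a power of $\dot H=\mathbf 2\times_{\{0\}}\prod_{e\in E}H_e$. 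For the semilattice factor: $\mathbf 2$-embeddability of $E$ gives $E\hookrightarrow\mathbf 2^{\Hom(E,\mathbf 2)}$, and $\mathbf 2$ embeds into $\dot H$ as $\mathbf 2\times_{\{0\}}\{1\}$ (the subsemilattice $\mathbf 2\times\{e\}$ where $e$ is the identity of $\prod_e H_e$), whence $E\hookrightarrow\dot H^{\Hom(E,\mathbf 2)}$. For the group-cone factors: each $\dot H_e$ embeds into $\dot H$ via the map induced by the group embedding $H_e\hookrightarrow\prod_{f\in E}H_f$ (inclusion of one coordinate, identity elsewhere), which is compatible with the $0$-extension construction; hence $\prod_{e\in A}\dot H_e^{A\cap{\Uparrow}e}$ embeds into $\dot H^{\,A\times A}$. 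Combining, $S$ embeds into $\dot H^{\Hom(E,\mathbf 2)}\times\dot H^{A\times A}$, a single power of $\dot H$, so $S$ is $\dot H$-embeddable.

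\smallskip

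I expect the only point requiring care is verifying that the coordinate inclusion $H_e\hookrightarrow\prod_{f}H_f$ and the homomorphism $\mathbf 2\hookrightarrow\dot H$ genuinely induce topological embeddings of $\dot H_e$ and of $\mathbf 2$ into $\dot H$ that respect both the semigroup operations and the reduced-product topologies — i.e. that the reduced product construction $(\cdot)\mapsto\mathbf 2\times_{\{0\}}(\cdot)$ is functorial for embeddings of topological groups and that it behaves well under the inclusion on the semilattice side. This is the analogue of the step ``$E\times\prod_{e\in E}\widehat H_e^E$ embeds into $\widehat H^{\Hom(E,\II)}\times\widehat H^{E\times E}$'' in the proof of Corollary~\ref{cones}, and is handled by the same routine check using the definition of the reduced product topology as the smallest topology making the projection to the semilattice continuous and the restriction to the ``big'' piece an embedding; no new idea is needed beyond what was used there.
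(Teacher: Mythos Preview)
Your proposal is correct and follows exactly the approach the paper indicates (the paper simply writes ``In the same way Theorem~\ref{T2} implies'' and gives no further proof). Your explicit invocation of Proposition~\ref{semlat}(6) to upgrade the $U$-semilattice $E$ to a $U_2$-semilattice --- so that the second conclusion of Theorem~\ref{T2} applies --- is precisely the one additional observation needed beyond the proof of Corollary~\ref{cones}, and the paper leaves this implicit.
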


\section{Characterizing precompact topological Clifford $U$-semigroups}\label{precomp}

In this section we shall apply Embedding Theorem~\ref{T2} to prove a compactification theorem for topological Clifford $U$-semigroups.
We recall that a topological semigroup $S$ is called {\em precompact} if $S$ embeds into a compact topological  semigroup.

It is well-known \cite[3.7.16]{AT} that a topological group $G$ is precompact if and only if $G$ is {\em totally bounded},
which means that for each non-empty open set $U\subset G$ there is a finite subset $F\subset G$ such that $FU=G=UF$.

Theorem~\ref{T2} combined with Proposition~\ref{p2.5} imply the following characterization of precompact topological Clifford $U$-semigroups.

\begin{theorem}\label{precomp} A topological Clifford $U$-semigroup $S$ is precompact if and only if the following conditions are satisfied:
\begin{enumerate}
\item the maximal semilattice $E=\{e\in S:ee=e\}$ of $S$ is precompact,
\item every maximal subgroup $H_e$, $e\in E$, of $S$ is precompact, and
\item $S$ is ditopological.
\end{enumerate}
\end{theorem}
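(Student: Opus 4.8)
The plan is to prove the two implications separately, with the forward direction being the routine one and the converse being where the real work of the paper (Theorem~\ref{T2}) is cashed in.

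For the ``only if'' direction, suppose $S$ embeds into a compact topological Clifford semigroup $K$ (such a $K$ exists by Proposition~\ref{compC}, which upgrades a compact topological semigroup hull to a compact topological \emph{Clifford} semigroup hull). First, $S$ is ditopological: by Proposition~\ref{p2.5} the compact topological Clifford semigroup $K$ is ditopological, and the class of ditopological Clifford semigroups is closed under taking Clifford subsemigroups, so $S\hookrightarrow K$ is ditopological. Next, the maximal semilattice $E$ of $S$ is precompact: the projection $\pi_K\colon K\to E_K$ onto the maximal semilattice of $K$ is a continuous homomorphism, $E_K$ is a compact topological semilattice, and $\pi_K$ restricted to $E=E_K\cap S$ is the identity, so $E$ embeds into the compact topological semilattice $E_K$. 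Finally, each maximal subgroup $H_e$ of $S$, $e\in E$, embeds into the maximal subgroup of $K$ containing $e$, which is a subgroup of the compact semigroup $K$; the closure of that subgroup in $K$ is a compact topological group (here one uses that in a compact topological semigroup the closure of a subgroup is a compact topological group, together with continuity of inversion on $K$ coming from the Koch--Wallace--Kruming theorem as in Proposition~\ref{compC}), so $H_e$ is precompact.

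For the ``if'' direction, assume (1)--(3). Since $S$ is a ditopological Clifford $U$-semigroup, fix a $U$-dense subset $A\subset E$ (for instance $A=E$) and apply Theorem~\ref{T2}: the homomorphism $\pi\hat h_A^A$ topologically embeds $S$ into the product $E\times\prod_{e\in A}\widehat H_e^{A\cap{\Uparrow}e}$. It therefore suffices to show that each factor of this product is precompact, since a Tychonoff product of precompact topological semigroups is precompact (the product of the compact hulls is a compact topological semigroup by Tychonoff's theorem). The factor $E$ is precompact by hypothesis (1). For a factor $\widehat H_e$, by hypothesis (2) the topological group $H_e$ embeds into a compact topological group $G_e$; then the cone $\widehat H_e=\II\times_{\{0\}}H_e$ embeds into $\widehat G_e=\II\times_{\{0\}}G_e$, which is a \emph{compact} topological Clifford semigroup (it is the continuous image of the compact space $\II\times G_e$ under the quotient map $q$, hence compact, and it is a topological Clifford semigroup by Corollary~\ref{c2.6}). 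Raising to the power $A\cap{\Uparrow}e$ and taking the product over $e\in A$ and with $E$, we conclude that $S$ embeds into a compact topological semigroup, i.e.\ $S$ is precompact.

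The main obstacle is really bookkeeping rather than a single deep step, since Theorem~\ref{T2} does the heavy lifting. The point needing the most care is the claim in the ``only if'' direction that each maximal subgroup $H_e$ is precompact: one must be sure that the ambient object into which $H_e$ embeds is genuinely a compact topological \emph{group} (with continuous inversion), which is why invoking Proposition~\ref{compC} — and through it the Koch--Wallace--Kruming theorem — is essential rather than merely citing that $K$ is a compact topological semigroup. A secondary point is verifying that $\widehat G_e$ is compact: this is immediate once one observes $\widehat G_e=q(\II\times G_e)$ for the natural quotient map $q$ and that $q$ is continuous, so no separate argument is needed beyond continuity of the reduced-product construction already recorded in Subsection on reduced products.
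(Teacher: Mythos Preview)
Your proposal is correct and follows essentially the same route as the paper, which records the theorem as an immediate consequence of Theorem~\ref{T2} and Proposition~\ref{p2.5} without spelling out the details you supply. One small simplification in the ``only if'' direction: once Proposition~\ref{compC} hands you a compact topological \emph{Clifford} semigroup $K\supset S$, inversion is already continuous on $K$ by definition, and each maximal subgroup $H_e^K=\pi_K^{-1}(e)$ is closed (hence compact) as the preimage of a point under the continuous projection $\pi_K$; so you need not re-invoke Koch--Wallace--Kruming or pass to closures of subgroups.
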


\section{Metrizability of topological Clifford $U$-semigroups}

In this section we apply the Embedding Theorem~\ref{T2} to construct subinvariant metrics on topological Clifford $U$-semigroups.

A metric $d$ on a Clifford semigroup $S$ will be called
\begin{itemize}
\item {\em left subinvariant} if $d(zx,zy)\le d(x,y)$ for any points $x,y,z\in S$;
\item {\em right subinvariant} if $d(xz,yz)\le d(x,y)$ for any points $x,y,z\in S$;
\item {\em subinvariant} if $\max\{d(zx,zy),d(xz,yz)\}\le d(x,y)=d(x^{-1},y^{-1})$ for any points $x,y,z\in S$.
\end{itemize}
 We shall say that a topological Clifford semigroup $S$ is {\em metrizable} ({\em by a subinvariant metric}) if the topology of $S$ is generated by some (subinvariant) metric.

By the Birkhoff-Kakutani Theorem~\cite[3.3.12]{AT}, a topological group $G$ is metrizable by a left subinvariant metric if and only if $G$ is first countable. By \cite[3.3.14]{AT}, a topological group $G$ is metrizable by a subinvariant metric if and only if $G$ is first countable and {\em balanced}. The latter means that for every neighborhood $U\subset G$ of the unique idempotent $e$ of $G$ there is a neighborhood $V\subset U$ of $e$ such that $xVx^{-1}=V$ for all $x\in G$. A simple example of a metrizable topological group, which is not balanced (and hence not metrizable by a subinvariant metric) is the group
$\Aff(\IR)$ of affine transformations of the real line.

\begin{theorem}\label{t6.1} Each second countable precompact topological Clifford semigroup $S$ is metrizable by a subinvariant metric.
\end{theorem}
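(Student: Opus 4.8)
The plan is to bypass the embedding theorems of the previous sections and instead use Proposition~\ref{compC} to realize $S$ as a subsemigroup of a \emph{compact metrizable} topological Clifford semigroup, and then manufacture the subinvariant metric by a symmetrization trick. First I would apply Proposition~\ref{compC}: since $S$ is Clifford and second countable, it embeds into a compact topological Clifford semigroup $K$ with $w(K)=w(S)\le\aleph_0$; a compact Hausdorff space of countable weight is metrizable, so we may assume $S\subset K$ and fix a metric $d$ on $K$ with $d\le 1$. Recall that $K$ is in fact a topological \emph{inverse} semigroup: by the Koch--Wallace--Kruming theorem (already invoked in the proof of Proposition~\ref{compC}) the inversion $(\cdot)^{-1}:K\to K$ is continuous. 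We also note that for $x\in S$ its inverse computed in $S$ coincides with its inverse computed in $K$ (by uniqueness of inverses), and that in any inverse semigroup $(xy)^{-1}=y^{-1}x^{-1}$ and $(x^{-1})^{-1}=x$.

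Next, let $K^1=K\cup\{1\}$ be $K$ with an isolated identity adjoined ($K^1$ is again a compact metrizable topological semigroup), and define $\rho:K\times K\to[0,1]$ by
\[
\rho(x,y)=\max\Bigl\{\,\sup_{z,w\in K^1}d(zxw,zyw),\ \sup_{z,w\in K^1}d(zx^{-1}w,zy^{-1}w)\,\Bigr\}.
\]
Taking $z=w=1$ shows $\rho(x,y)\ge d(x,y)$, so $\rho$ separates points; the triangle inequality and symmetry are immediate, so $\rho$ is a metric. To see that $\rho$ generates the topology of $K$, observe that $\rho\ge d$ makes the $\rho$-topology finer than the $d$-topology, while $\rho$ is continuous in the $d$-topology: by compactness of $K^1$ and joint continuity of multiplication and of inversion, the maps $(z,w,x,y)\mapsto d(zxw,zyw)$ and $(z,w,x,y)\mapsto d(zx^{-1}w,zy^{-1}w)$ are uniformly continuous on $K^1\times K^1\times K\times K$, which makes each of the two suprema — and hence $\rho$ — (uniformly) continuous in $(x,y)$. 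Thus $\rho$ and $d$ induce the same topology on $K$, and consequently $\rho|_{S\times S}$ induces the topology of $S$.

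It then remains to verify that $\rho$ is subinvariant on $S$. For $a,x,y\in S$ the first supremum in $\rho(ax,ay)$ is $\sup_{z,w\in K^1}d((za)xw,(za)yw)$, and since $\{za:z\in K^1\}\subset K^1$ this is bounded by the first supremum in $\rho(x,y)$; the second supremum in $\rho(ax,ay)$ equals $\sup_{z,w\in K^1}d(zx^{-1}(a^{-1}w),zy^{-1}(a^{-1}w))$ (using $(ax)^{-1}=x^{-1}a^{-1}$), and since $\{a^{-1}w:w\in K^1\}\subset K^1$ it is bounded by the second supremum in $\rho(x,y)$; hence $\rho(ax,ay)\le\rho(x,y)$. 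Right subinvariance is proved symmetrically, and $\rho(x^{-1},y^{-1})=\rho(x,y)$ holds because passing to inverses merely interchanges the two suprema (using $(x^{-1})^{-1}=x$). The only genuine obstacle is arranging the three requirements $d(zx,zy)\le d(x,y)$, $d(xz,yz)\le d(x,y)$ and $d(x^{-1},y^{-1})=d(x,y)$ simultaneously; this is exactly what the two-sided supremum together with the inversion-symmetrized $\max$ in the definition of $\rho$ is built to do, and once set up the verification rests only on the inverse-semigroup identity $(xy)^{-1}=y^{-1}x^{-1}$ and on the inclusion $S\subset K$ (which is what places the multipliers $a$ and $a^{-1}$ inside $K^1$).
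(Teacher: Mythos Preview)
Your proof is correct and follows essentially the same strategy as the paper: embed $S$ into a compact metrizable topological Clifford semigroup via Proposition~\ref{compC}, then symmetrize the metric by taking suprema over translates together with an inversion term, and use compactness to see that the new metric still generates the topology. The paper's symmetrization uses only one-sided translates, $\rho(x,y)=\max_{z\in K}\max\{d(zx,zy),d(xz,yz),d(zx^{-1},zy^{-1}),d(x^{-1}z,y^{-1}z)\}$, whereas your two-sided version with an adjoined identity makes the subinvariance check (especially the cross-terms arising from $(ax)^{-1}=x^{-1}a^{-1}$) completely transparent; otherwise the arguments coincide.
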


\begin{proof} By Proposition~2.3, $S$ embeds into a second countable compact topological Clifford semigroup $K$, which is metrizable by some metric $d$. The continuity of the semigroup operation and the inversion on the compact space $K$ implies that the metric
$$\rho(x,y)=\max_{z\in K}\max\big\{d(zx,zy),d(xz,yz),d(zx^{-1},zy^{-1}), d(x^{-1}z,y^{-1}z)\big\}$$
is a well-defined subinvariant continuous metric on $K$.
By the compactness of $K$, this metric generates the topology of $K$ and its restriction $\rho|S\times S$ is a subinvariant metric generating the topology of $S$.
\end{proof}

Now we turn to the metrization problem for ditopological Clifford $U$-semigroups.
With help of Theorem~\ref{T2} this problem can be reduced to the problem of metrization of semilattices and cones over topological groups.

The latter problem is quite simple. Assume that a topological
group $H$ is metrizable by a metric $d$. Then the cone $\widehat
H$ over $H$ is metrizable by the metric
$$\hat d(x,y)=\min\{ t_{x}+t_{y},|t_{x}-t_{y}|+d(h_{x},h_{y})\},\;\;x,y\in \hat H,$$ where $(t_x,x),(t_y,y)\in[0,1]\times H$ are any pairs such that $x=q(t_x,x)$ and $y=q(t_y,y)$. Here $q:[0,1]\times H\to\widehat H$ is the canonical projection. If the metric $d$ on $H$ is
(left or right) subinvariant, then so is the metric $\hat d$ on
$\widehat H$.

This fact combined with Theorem~\ref{T2} implies the following metrization theorem, which generalizes some metrization theorems proved in \cite{Ban}.

\begin{theorem}\label{metr} Assume that $S$ is a ditopological topological Clifford $U$-semigroup and $A$ is a countable $U$-dense subset on $S$.
The topological semigroup $S$ is
\begin{enumerate}
\item metrizable if and only if the maximal semilattice $E$ is metrizable and all maximal
subgroups $H_e$, $e\in A$, of $S$ are first countable;
\item metrizable by a left subinvariant metric if and only if the maximal semilattice $E$ is metrizable by a subinvariant metric and all maximal
subgroups $H_e$, $e\in A$, of $S$ are first countable;
\item metrizable by a subinvariant metric if and only if the maximal semilattice $E$ is metrizable by a subinvariant metric and all maximal
subgroups $H_e$, $e\in A$, of $S$ are first countable and balanced.
\end{enumerate}
\end{theorem}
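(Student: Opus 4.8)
The plan is to prove, for each of the three items, both implications. For the \emph{only if} directions I would restrict a metric on $S$ to the subspaces $E$ and $H_e=\pi^{-1}(e)$ of $S$; for the \emph{if} directions I would use the topological embedding
$$\iota:=\pi\hat h_A^A\colon S\to P:=E\times\prod_{e\in A}\widehat H_e^{A\cap{\Uparrow}e}$$
supplied by Theorem~\ref{T2}. The observation that makes the \emph{if} direction work is that, $A$ being countable, every set $A\cap{\Uparrow}e$ is countable, so $P$ is a \emph{countable} product of the topological semilattice $E$ and copies of the cones $\widehat H_e$, $e\in A$.

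For the \emph{only if} parts, let $d$ generate the topology of $S$. Its restrictions to $E$ and to each $H_e$ generate the subspace topologies, so $E$ is metrizable and every $H_e$ is first countable, which settles~(1). If $d$ is left subinvariant, then its restriction to the commutative semigroup $E$ is left and, by commutativity, right subinvariant, and since every element of a semilattice is its own inverse, this restriction is a subinvariant metric on $E$; this settles~(2). If $d$ is subinvariant, then so is its restriction to $E$, and its restriction to each maximal group $H_e$ is a subinvariant metric generating the topology of $H_e$; in particular $H_e$ is first countable, and it is balanced, since for $\varepsilon>0$ the ball $V=\{y\in H_e:d(y,e)<\varepsilon\}$ satisfies $xVx^{-1}=V$ for all $x\in H_e$: indeed $xex^{-1}=e$, so $d(xyx^{-1},e)=d(xyx^{-1},xex^{-1})\le d(yx^{-1},ex^{-1})\le d(y,e)$ for every $y\in H_e$, giving $xVx^{-1}\subseteq V$, and the reverse inclusion follows by applying this with $x^{-1}$ in place of $x$. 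This settles~(3).

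For the \emph{if} parts I would argue as follows. Under the hypothesis of~(1), each $H_e$ with $e\in A$ is a first countable topological group, hence metrizable by Birkhoff--Kakutani, so by the cone metrization formula recalled just before the theorem each $\widehat H_e$ is metrizable; a countable product of metrizable spaces is metrizable, so $P$ is metrizable, and hence so is its subspace $\iota(S)\cong S$. Under the hypothesis of~(2) (resp.\ of~(3)), Birkhoff--Kakutani (resp.\ \cite[3.3.14]{AT}) provides on each $H_e$, $e\in A$, a left subinvariant (resp.\ subinvariant) metric generating its topology, which via the cone metrization formula yields a left subinvariant (resp.\ subinvariant) metric on $\widehat H_e$; here one also notes that the cone inversion is $(t,h)\mapsto(t,h^{-1})$, so that the identity $d(x^{-1},y^{-1})=d(x,y)$ survives in the cone. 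Together with the given subinvariant metric on $E$, every factor of $P$ then carries a left subinvariant (resp.\ subinvariant) metric; after replacing each by its truncation $\min\{d_n,1\}$ (which preserves these properties) and forming the weighted sum $\sum_n 2^{-n}d_n$, one gets a metric generating the topology of $P$ that is still left subinvariant (resp.\ subinvariant), because the defining inequalities --- and, in the subinvariant case, the coordinatewise action of inversion --- pass through the sum. Restricting this metric along $\iota$, which preserves multiplication and inversion (being a homomorphism of Clifford semigroups), yields a left subinvariant (resp.\ subinvariant) metric generating the topology of $S$.

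The only step that involves a genuine computation rather than bookkeeping is the verification, used in the \emph{only if} part of~(3), that a topological group carrying a subinvariant metric is balanced; I expect this (trivial as it is) to be the sole ``obstacle'', everything else being an assembly of Theorem~\ref{T2}, the countability of $A$, the classical metrization theorems for topological groups, and the stability of left/right subinvariance under countable products and passage to cones.
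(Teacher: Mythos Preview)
Your proposal is correct and follows exactly the route the paper indicates: the paper does not spell out a proof but simply states that the theorem follows from the cone-metrization formula together with Theorem~\ref{T2}, and your argument is precisely this assembly, with the ``only if'' directions handled by restriction. The details you supply (truncated weighted-sum product metric, preservation of subinvariance under the cone construction and under the embedding, balancedness from a subinvariant metric) are all routine and correctly stated.
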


Let us remark that by Proposition~\ref{pVsep}, each second countable $U$-semilattice contains a countable $U$-dense subset $A\subset E$.

Theorem~\ref{metr} reduces the problem of (subinvariant) metrizability of a topological Clifford $U$-semigroup $S$ to the problem of (subinvariant)
metrizablity of the maximal subsemilattice $E$ of $S$. Surprisingly, but this problem is not trivial as witnessed by the following simple example.

\begin{example}\label{e6.4} There exists a topological semilattice $E$ having the following properties:
\begin{enumerate}
\item $E$ is countable and locally compact;
\item $E$ is Lawson;
\item $E$ is metrizable;
\item $E$ is not metrizable by a subinvariant metric;
\item $E$ is not precompact.
\end{enumerate}
\end{example}

\begin{proof}
Consider a semilattice $E=\{0\}\cup \{\frac{1}{n}\}_{n\in \IN}$ endowed with the semilattice operation of minimum.
Endow $E$ with the topology $\tau$ in which all
non-zero points are isolated and the sets
$B_n=\{0\}\cup \{\frac{1}{2k}\}_{k\geqslant n}$, $n\in\IN$, form a neighborhood base at $0$. It is clear that $E$ is a topological semilattice satisfying the conditions (1)--(3) of Example~\ref{e6.4}.

Assume that $E$ is metrizable by a subinvariant metric $d$.
Then for every $n\in\IN$ and the points $x=0$, $y=\frac1{2n}$, $z=\frac1{2n+1}$, we get
$$d(0,\tfrac1{2n+1})=d(zx,zy)\le d(x,y)=d(0,\tfrac1{2n})\underset{n\to\infty}{\longrightarrow}0,$$ which implies that the sequence $\frac1{2n+1}$ tends to zero. But this contradicts the choice of the topology on $E$.
Therefore, $E$ cannot be metrizable by a subinvariant metric. By Theorem~\ref{t6.1}, the second countable topological semilattice $E$ is not precompact.
\end{proof}


\begin{thebibliography}{}

\bibitem{AT} A.~Arhangel'skii, M.~Tkachenko, {\em Topological groups and related
structures}, Atlantis Press, Paris; World Sci. Publ., NJ, 2008.

\bibitem{Ban} T.~Banakh, {\em On cardinal invariants and metrizability of topological inverse Clifford semigroups},  Topology Appl. {\bf 128}:1 (2003) 13--48.

\bibitem{BP} T.~Banakh, I.~Pastukhova, {\em Topological and ditopological unosemigroups}, Mat. Stud.  {\bf 39}:2 (2013) 119--133.



\bibitem{CHK1} J.~H.~ Carruth, J.~A.~ Hildebrant and R.~J.~ Koch, {\em The Theory of
Topological Semigroups}, I. Marcell Dekker, Inc., New York and
Basel, 1983 and 1986.

\bibitem{CHK2} J.H.~ Carruth, J.A.~ Hildebrant and R.J.~ Koch, {\em The Theory of
Topological Semigroups}, II. Marcell Dekker, Inc., New York and
Basel, 1983 and 1986.


\bibitem{Eng} R.~Engelking, {\em Theory of dimensions finite and infinite}, Heldermann Verlag, Lemgo, 1995.

\bibitem{Howie}  J.M.~ Howie, {\em Fundamentals of Semigroup
Theory}, Oxford University Press Inc, New York, 1995.

\bibitem{Hryn} O.~Hryniv, {\em Universal objects in some classes of Clifford topological inverse semigroups}, Semigroup Forum. {\bf 75}:3  (2007) 683--689.

\bibitem{KW} R.J.~Koch, A.D.~Wallace, {\em Notes on inverse semigroups}, Rev. Roumaine Math. Pures Appl. {\bf 9} (1964), 19--24.

\bibitem{Krum} P.D.~Kruming, {\em Lattice-ordered semigroups}, Izv. Vyss. Ucebn. Zaved. Matematika. {\bf 43}:6 (1964), 78--87.


\bibitem{Pet} M.~Petrich, {\em Inverse semigroups}, Wiley-Intersci. Publ. John Wiley \&\ Sons Inc., New York, 1984.


\end{thebibliography}
\end{document}